\newcommand{\BlackBoxes}{\global\overfullrule5pt}
\newcommand{\R}{\mathbb{R}}
\newcommand{\N}{\mathbb{N}}
\newcommand{\Z}{\mathbb{Z}}
\newcommand{\E}{\mathbb{E}}
\newcommand{\PP}{\mathbb{P}}
\newcommand{\Eop}{\operatorname{\mathbb{\E}}}
\newcommand{\Pop}{\operatorname{\mathbb{\PP}}}
\renewcommand{\P}{\mathbb{P}}
\newtheorem{theorem}{Theorem}
\newtheorem{corollary}[theorem]{Corollary}
\newtheorem{lemma}[theorem]{Lemma}
\theoremstyle{definition}
\newtheorem{remark}[theorem]{Remark}
\newtheorem{definition}[theorem]{Definition}
\numberwithin{equation}{section} \numberwithin{theorem}{section}
\def\0{\kern0pt\-\nobreak\hskip0pt\relax}
 \def\@serieslogo{%
 \vbox to\headheight{%
 \parindent\z@ \fontsize{6}{7\p@}\selectfont
\today $\;$ 
 \vss}}}
\def\makeoverbar#1#2#3#4#5#6#7{%
 \setbox0=\hbox{$\m@th#2\mkern#5mu{{}#3{}}\mkern#6mu$}%
 \setbox1=\null \dimen@=#4\fontdimen8#13 \dimen@=3.5\dimen@
 \advance\dimen@ by \ht0 \dimen@=-#7\dimen@ \advance\dimen@ by \wd0
 \ht1=\ht0 \dp1=\dp0 \wd1=\dimen@
 \dimen@=\fontdimen8#13 \fontdimen8#13=#4\fontdimen8#13
 \rlap{\hbox to \wd0{$\m@th\hss#2{\overline{\box1}}\mkern#5mu$}}
 \fontdimen8#13=\dimen@}
\def\mylabel#1#2{{\def\@currentlabel{#2}\label{#1}}}
\begin{document}


\makeatletter \providecommand\@dotsep{5} \makeatother

\title[Risk-Sensitive Dividend Problems]{Risk-Sensitive Dividend Problems}

\author[N. \smash{B\"auerle}]{Nicole B\"auerle${}^*$}
\address[N. B\"auerle]{Department of Mathematics,
Karlsruhe Institute of Technology, D-76128 Karlsruhe, Germany}

\email{\href{mailto:nicole.baeuerle@kit.edu}
{nicole.baeuerle@kit.edu}}

\author[A. \smash{Ja\'{s}kiewicz}]{Anna Ja\'{s}kiewicz $^\ddag$}
\address[A. Ja\'{s}kiewicz]{Institute of Mathematics and Computer Science,
Wroc{\l}aw University of Technology, PL-50-370 Wroc{\l}aw, Poland}

\email{\href{mailto:anna.jaskiewicz@pwr.edu.pl} {anna.jaskiewicz@pwr.edu.pl}}

\thanks{${}^\ddag$ The second author is grateful to the Alexander von Humboldt Foundation for supporting her
research stay at the Institute of Stochastic at the KIT. This work was also partially supported
by the National Science Centre [Grant DEC-2011/03/B/ST1/00325].}

\begin{abstract}
We consider a discrete time version of the popular optimal dividend payout problem in risk theory.
The novel aspect of our approach is that we allow for a risk averse insurer, i.e., instead of maximising the expected discounted
dividends until ruin we maximise the expected {\em utility} of  discounted dividends until ruin.
This task has been proposed as an open problem in \cite{gershiu}.
The model in a continuous-time Brownian motion setting with the exponential utility
function  has been analysed in \cite{schach}. Nevertheless, a complete solution has not been provided.
In this work, instead we  solve the problem in discrete time setup for the exponential and the power utility functions and give the structure
of optimal history-dependent dividend policies.
We make use of certain ideas studied earlier in \cite{br_mor}, where Markov decision processes with general utility functions
were treated. Our analysis, however,  include new aspects, since the reward functions in this case are not bounded.

\end{abstract}
\maketitle

\vspace{0.5cm}
\begin{minipage}{14cm}
{\small
\begin{description}
\item[\rm \textsc{ Key words} ]
{\small Markov decision process, Dividend payout, Risk aversion, History-dependent policy, Fixed point problem}
\item[\rm \textsc{AMS subject classifications} ]
{\small  90C40, 91B30 }
\end{description}
}
\end{minipage}

\section{Introduction}\label{sec:intro}
The dividend payout problem in risk theory has been introduced by de Finetti \cite{deFinetti} and has since
then been investigated under various extensions
during the decades up to now; see, for instance, \cite{turkey, schach}.
The task is to find in a given model for the free surplus process of an insurance company,
a dividend payout strategy that maximises  the expected discounted dividends until ruin.
Typical models for the surplus process are compound Poisson processes, diffusion processes, general renewal processes
or discrete time processes. The reader is referred  to \cite{albrthonh} and \cite{avanzi}, where  an excellent overview of recent results
is provided.

In \cite{gershiu} the authors propose the problem of maximising the expected {\em utility} of  discounted dividends until ruin instead of
maximising the expected discounted dividends until ruin.
This means that  an insurance company is equipped with some utility function that helps it to measure the accumulated dividends paid to
the shareholders. If this utility is increasing and concave, the company is risk averse (see Remark 2.2).
To the best of our knowledge, there is only one work \cite{schach},
in which this idea was taken up. More precisely, the  authors in \cite{schach} consider a linear Brownian motion model
for the free surplus process and apply the exponential utility function to evaluate the  discounted dividends until ruin. It turns out that
the mathematics involved in the  analysis of this problem is quite different from the one used in the risk neutral case and
only partial results could be obtained. In contrast to the same problem with a risk neutral insurance company,
where the optimal dividend payout strategy is a barrier strategy (see e.g., \cite{at}), the authors in \cite{schach}
are not able to identify the structure of the optimal dividend policy rigorously. They show imposing
some further assumptions  that  there is  a time dependent optimal barrier.

We study the same problem but with a discrete time surplus process.
The risk neutral problem within such a framework  can be found in  Section 9.2 in  \cite{br} or in Section 1.2 in  \cite{s}.
By making use of the dynamic programming approach the authors in  \cite{br}  and  \cite{s} prove that the optimal dividend payout
policy is a stationary band-strategy. In \cite{abt}, on the other hand, the authors  consider a discrete time model
that is formulated with the aid of a general L\'evy surplus process but the dividend payouts are allowed
only at random discrete time points.  This  version  can again be solved  by the dynamic programming arguments.
However, the problem with a general utility function is more demanding.
Like in the continuous time setting \cite{schach}, it requires a sophisticated analysis.
It is worth mentioning that   Markov decision processes with
general utility functions
have been already studied in \cite{br_mor,kkm}.  Moreover, there are also some papers, where the specific utility functions
are considered. For example, Jaquette  \cite{j1,j2} and Chung and Sobel \cite{cs} are among the first who examined
discounted payoffs in  Markov decision processes with the decision maker that is equipped with a constant risk aversion, i.e.,
grades her random payoffs with the help of the exponential utility function.
 The common feature of all the aforementioned papers is the fact that they deal with bounded rewards or costs.
 Therefore, their results cannot be directly applied to our case, where the payoffs are unbounded.
 We make use of the special structure of the underlying problem and show that the optimal dividend payout policy
is a time dependent band-strategy. The value function itself can be characterised as a solution
to a certain optimality equation.
Furthermore, we also study  the dividend payout model with the power utility function.
As noted in \cite{br_mor}, the original Markov decision process  can then be viewed as a Markov decision process
defined on the extended state space.
 We employ these techniques to solve our model, but only in the first step, where we use an approximation of the value
 function in the infinite time horizon by value functions in the finite time horizons.
 In contrast to the exponential utility case, we can only partly identify the structure of the optimal dividend payout  policy.
 However, we are able to show that there is a barrier such that when the surplus is above the barrier,
 it is always optimal to pay down to a state below the barrier.
 The value function is again  characterised as a solution to some optimality equation.
Summing up, the optimal dividend payout problem with the exponential utility function
can be solved completely in the discrete time case, in contrast to the  continuous-time problem in  \cite{schach},
whilst for the case with the power
utility function we are at least able to identify the important global structure of the optimal policy.

The paper is organised as follows. In the next section we introduce the model together with mild assumptions
and general history-dependent policies. Section 3 is devoted to a study of the exponential utility case.
We show first that the value function  $J$  for discounted payoffs   satisfies an optimality
equation and give a lower and an upper bound for $J$. Then, we identify properties of the minimiser of the right-hand side of
the optimality equation.
This enables us to show that the minimiser indeed defines an optimal policy, which is
a non-stationary band-policy. The non-stationarity is based only  on the time-dependence.
 The power utility case  is treated in Section 4.
We pursue here a little different approach, but it  also leads to an optimality equation.
The policies obtained in this setting are really
history-dependent. Nonetheless, we are still able to show that the optimal policy is of a barrier-type.
In Section 5 we provide the policy improvement algorithm for the model with the exponential utility.
Finally, Section 6 is devoted to concluding remarks and open issues.

\section{The Model}\label{sec:mod}
We  consider the financial situation of an insurance company at discrete times,
say $n \in\N_0:= 0, 1, 2,\ldots.$	
Assume there is an initial surplus $x_0 = x\in X:=\Z$ and $x_0\ge 0.$
The surplus $x_{n+1}$ at time $n+1$ evolves according to the following equation
 \begin{equation}
\label{maineq}
x_{n+1} = x_n-a_n+	
Z_{n+1},\; \mbox{if }x_n\ge 0\quad\mbox{and}\quad
x_{n+1} = x_n,\; \mbox{if }x_n< 0.
\end{equation}
Here $a_n\in A(x_n):=\{0,\ldots,x_n\}$  denotes the dividends paid to the shareholders at time $n,$ and $Z_{n+1}$ represents the income
(possibly negative) of the company during the time interval from $n$ to $n+1.$
More precisely, $Z_{n+1}$ is the
difference between premium and claim sizes in the $(n+1)$-st time interval.
Further, we assume that
$Z_1, Z_2,\ldots$ are independent and identically distributed integer-valued random variables with distribution
$(q_k)_{k\in \Z}$, i.e., $\Pop(Z_n=k)=q_k,$ $k\in \Z.$
A  dividend payout problem in the risk theory can be viewed as a Markov decision process with the state space  $X,$
the set of actions $A(x)$ available in state $x$ (for completeness, we put $A(x)=\{0\}$ for $x<0$)
and the transition probability $q(\cdot|x,a)$ of
the next state, when $x$ is the current state and $a$ is the amount of dividend paid to the shareholders.
Note that the dynamics of equation (\ref{maineq}) implies that
 $q(y|x,a)=q_{y-x+a}$ for $x\ge 0$ and $q(x|x,a)=1$ if $x<0.$
 For the set of admissible pairs $D:=\{(x,a): \;x\in X,\;a\in A(x)\}$ we define the function
 $r: D\mapsto \R$ as  $r(x,a)=a$ for $x\in X.$
	
The feasible history spaces are defined as follows $\Omega_0=X,$ $\Omega_k=D^k\times X$ and
$\Omega_\infty=D^\infty.$ A {\it policy} $\pi = (\pi_{k})_{k\in \N_0}$
is a sequence of  mappings from $\Omega_k$ to $A$ such that
$\pi_k(\omega_k)\in A(x_k),$ where $\omega_k=(x_0,a_0,\ldots,x_k)\in \Omega_k.$
Let $\Gamma$ be the class of all  functions $g:X\mapsto A$ such that $g(x)\in A(x).$
A Markov policy is $\pi=(g_k)_{k\in\N_0}$ where each $g_k\in \Gamma.$
By $\Pi$ and $\Pi^M$  we denote the set of all history-dependent and Markov policies, respectively.
By the Ionescu-Tulcea theorem \cite{n},
for each policy $\pi$ and each initial state $x_0=x,$ a probability measure $\P_x^\pi$
and a stochastic process $(x_k,a_k)_{k\in\N_0}$ are defined on $\Omega_\infty$
in a canonical way, where $x_k$ and $a_k$ describe the state and the decision at stage $k,$
respectively.
By $\Eop_x^\pi$ we denote the expectation operator with respect to the probability measure $\P_x^\pi.$

Ruin occurs as soon as the surplus gets negative.
The epoch $\tau$ of ruin is defined as the smallest integer $n$
such that $x_n <0.$
The question arises as to how the risk-sensitive insurance company, equipped with some
utility function will choose its dividend strategy. More precisely, we shall consider
the following optimisation problem
$$\sup_{\pi\in\Pi} \Eop_x^\pi U_\gamma\left(\sum_{k=0}^\infty \beta^k r(x_k,a_k)\right) =
\sup_{\pi\in\Pi} \Eop_x^\pi U_\gamma\left(\sum_{k=0}^{\tau-1}\beta^k a_k\right),\quad x\ge 0,$$
where $\beta\in (0,1)$ is a discount factor and  either
\begin{itemize}
	\item[(1)] $U_\gamma$ is the exponential utility function,
	i.e.,  $U_\gamma(x)=\frac 1{\gamma} e^{\gamma x}$ with $\gamma<0,$ or
	\item[(2)]  $U_\gamma$ is the power utility function, i.e.,  $U_\gamma(x)=x^{\gamma}$ with $\gamma\in (0,1).$
\end{itemize}

Let $Z$ be a random variable with the same distribution as $Z_1.$
Throughout the paper the following assumptions will be supposed to hold true.

\begin{itemize}
\item[(A1)]  $\Eop Z^+<+\infty,$ where $Z^+=\max\{Z,0\};$
\item[(A2)] $\Pop(Z<0)>0.$
\end{itemize}
Assumption (A2) allows to avoid a trivial case, when the ruin will never occur
under any policy $\pi\in \Pi.$

\begin{remark}
In our study, we assume that the random variables $\{Z_n\}$ only take integer values and the initial capital is
also integer.  From the proof of Lemma 1.9 in \cite{s}, it follows that in our problem
we can restrict without  loss of generality to the integer dividend payments.  \end{remark}

\begin{remark}
If the function $U_\gamma$ is strictly concave and increasing as in our case, then the quantity $U_\gamma^{-1} \big( \Eop[U_\gamma(X)]\big)$
is called a {\em certainty equivalent} of the random variable $X.$ From the optimisation's point of view  it does not matter which value
 $U_\gamma^{-1} \big( \Eop[U_\gamma(X)]\big)$  or $ \Eop[U_\gamma(X)]$ we study, because
the inverse function $U_\gamma^{-1}$ is monotonic.  However,  the certainty equivalent has an important meaning. If we apply the Taylor
expansion, then the certainty equivalent can be written as follows
$$ U^{-1}_\gamma\Big(\Eop\big[U_\gamma(X)\big]\Big)\approx \Eop X - \frac12 l(\Eop X) Var[X],$$
where $$ l(y) = -\frac{U''_\gamma(y)}{U'_\gamma(y)}$$ is called the {\em Arrow-Pratt} function of absolute risk aversion. Hence,
 the second term accounts for the variability of $X$ (for a discussion see \cite{bpli}). If $U_\gamma$ is concave like in our case,
 then $l(\cdot)\ge 0$ which means that the variance is subtracted. This fact implies that the decision maker is risk averse.
\end{remark}

\section{The Exponential Utility Function}
In this section we assume that the insurer is risk averse and grades her random payoffs
by taking the expectations
of the exponential utility function of these random rewards. More precisely,
we assume that the decision maker is
equipped with the constant risk coefficient $\gamma<0.$
The objective of the risk averse insurer is to maximise
the expected discounted payoff function:
$$ \widetilde{J}_\pi(x)= \Eop_x^\pi U_\gamma\left(\sum_{k=0}^{\infty}\beta^k r(x_k,a_k)\right)$$
and to find a policy $\pi^*\in\Pi$ (if exists) such that
\begin{equation}
\label{optpp}
\widetilde{J}(x):=\widetilde{J}_{\pi^*}(x)=\sup_{\pi\in\Pi} \widetilde{J}_\pi(x)
\end{equation}
for all $x\in X.$ It is obvious that the optimal policy $\pi^*$ would depend on $\gamma$ and $\beta.$
Clearly, $\widetilde{J}(x)=\frac1\gamma$ for every $x<0.$

\subsection{Optimality equation and the properties of  its largest minimiser}
Our discounted model with the exponential utility function
reveals some kind of non-stationarity
that is implied by a discount factor. Therefore, one can extend the state space in the following way
$\widetilde{X}:=X\times I,$ with $I:=[\gamma,0)$ (cf. also \cite{ds1}, \cite{br_mor}).
If the process is in the state $(x,\theta)$ and the
insurer selects an action
$a\in A(x),$ then the probability
of moving to a next state $(x',\theta')$ is $q(x'|x,a),$ if $\theta'=\theta\beta$ and is $0,$ if
$\theta'\not=\theta\beta.$
The second component of the state space keeps the track of the  discount factor
that changes over time in a deterministic way.

Furthermore, we can define an {\it extended history-dependent  policy}
$\sigma = (\sigma_{k})_{k\in \N_0},$ where
 $\sigma_{k}$ is a  mapping from the set of extended feasible histories up to the $k$th day
 to the action set $A$ such that
$\sigma_k(h_k) \in A(x_k)$ with
$h_k=(x_0,\theta, a_0,\ldots,a_{k-1},x_k,\theta\beta^k),$ $k\ge 1$ and $h_0=(x_0,\theta).$
Recall that $(x_m,a_m)\in D$ for $m\in\N_0.$
Let $\Xi$ be the set of all extended history-dependent policies.
Note that for any $\sigma\in\Xi$ ($\theta\in I$ is fixed), there exists  a  policy $\pi\in\Pi$  that is
equivalent to $\sigma$ in the following sense:
$$\pi_k(\cdot|\omega_k):=\sigma_k(\cdot|h_k),\quad k\in \N_0.$$
Obviously, $\pi$ depends on $\theta\in I.$
Therefore, for simplicity of notation we shall still use the original policies $\pi\in\Pi,$ and the
expectation operator $\Eop_x^\pi,$
where $x$ is the first component of the initial state.
The dependence on $\theta\in I$ will be denoted by adding
the second variable to the value function.

For any initial state $(x,\theta)\in \widetilde{X}$   we define
$$J(x,\theta):=\inf_{\pi\in\Pi} J_\pi(x,\theta),\quad\mbox{where}\quad
 J_\pi(x,\theta):=\Eop_x^\pi \left(\exp\left\{\theta\sum_{k=0}^{\infty}
\beta^k r(x_k,a_k)\right\}\right)$$
for $\pi\in\Pi.$
 Obviously, $0\le J_\pi(x,\theta)\le 1$ for all $x\in X,$ $\theta\in I$ and $\pi\in \Pi.$
Observe next that our optimisation problem (\ref{optpp}) is equivalent to  the problem of
minimising
$J_\pi(x,\gamma)$ over $\pi\in\Pi.$
By $C(\widetilde{X})$ we denote the space of bounded continuous  real-valued  functions on
$\widetilde{X}.$\\

\begin{theorem}\label{theo:fixed_point}
For every $(x,\theta) \in \widetilde{X}$ the function $J$ is a solution
to the following  discounted optimality equation
\begin{eqnarray}\label{eq:fixedpoint}J(x,\theta)&=&\min_{a\in A(x)}\left[e^{\theta r(x,a)}
\sum_{x'\in X} J(x',\theta\beta)q(x'|x,a)\right]\\
\nonumber &=& \min_{a\in \{0,\ldots,x\}}\left[e^{\theta a}
\left(\sum_{k=a-x}^\infty J(x-a+k,\theta\beta)q_k+ \sum_{k=-\infty}^{a-x-1} q_k\right)\right]
\end{eqnarray}
 \end{theorem}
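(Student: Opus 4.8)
The plan is to read \eqref{eq:fixedpoint} as a dynamic programming principle and to establish it through the two inequalities $J(x,\theta)\le (TJ)(x,\theta)$ and $J(x,\theta)\ge (TJ)(x,\theta)$, where I abbreviate the first line of the optimality equation by $(TJ)(x,\theta):=\min_{a\in A(x)}e^{\theta a}\sum_{x'\in X}J(x',\theta\beta)q(x'|x,a)$. The decisive structural feature, which takes over the role played by bounded rewards in the cited literature, is that $\theta<0$ and $r(x,a)=a\ge 0$ force every factor $e^{\theta\beta^k r(x_k,a_k)}$ into $(0,1]$; hence $0\le J_\pi(x,\theta)\le 1$ and, along any trajectory, the partial products $W_N:=\exp\{\theta\sum_{k=0}^N\beta^k r(x_k,a_k)\}$ decrease to the infinite-horizon integrand. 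By dominated convergence (the $W_N$ are monotone and bounded by $1$) this legitimises every interchange of expectation with infinite summation below, so that the unboundedness of the accumulated dividends is neutralised by the utility transformation. Before the two inequalities I record that $J(x',\theta)=1$ for $x'<0$: from a ruined state the only admissible action is $0$, the surplus is frozen, the discounted dividend sum equals $0$, and $\exp\{\theta\cdot 0\}=1$. Substituting $q(x'|x,a)=q_{x'-x+a}$ and splitting at the ruin threshold $x'=x-a+k<0\Leftrightarrow k\le a-x-1$ then converts the first line of \eqref{eq:fixedpoint} into the second, the frozen value $1$ producing the purely probabilistic tail $\sum_{k=-\infty}^{a-x-1}q_k$.

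For $J\ge TJ$ I fix an arbitrary $\pi\in\Pi$ and peel off the first stage. The initial action $a_0=\pi_0(x)$ depends only on $x$, so I factor $e^{\theta a_0}$ out and rewrite the remaining exponent as $\theta\beta\sum_{k\ge 1}\beta^{k-1}r(x_k,a_k)$. Conditioning on the first transition $x_1=x'$ and using the time-homogeneous transition law together with the canonical Ionescu--Tulcea construction, the shifted process started from $x'$ is governed by a history-dependent policy $\pi'$ acting from the extended initial state $(x',\theta\beta)$, so the conditional expectation equals $J_{\pi'}(x',\theta\beta)\ge J(x',\theta\beta)$. Hence $J_\pi(x,\theta)=e^{\theta a_0}\sum_{x'}J_{\pi'}(x',\theta\beta)q(x'|x,a_0)\ge e^{\theta a_0}\sum_{x'}J(x',\theta\beta)q(x'|x,a_0)\ge (TJ)(x,\theta)$, and the infimum over $\pi$ gives $J\ge TJ$.

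For the reverse inequality I fix $a\in A(x)$ and $\epsilon>0$ and, for each landing state $x'$, choose $\pi^{x'}$ with $J_{\pi^{x'}}(x',\theta\beta)\le J(x',\theta\beta)+\epsilon$. Pasting these continuations after the deterministic first action $a$ yields a policy $\pi\in\Pi$ (again a bookkeeping step inside the Ionescu--Tulcea framework) with $J_\pi(x,\theta)=e^{\theta a}\sum_{x'}J_{\pi^{x'}}(x',\theta\beta)q(x'|x,a)$. Using $\sum_{x'}q(x'|x,a)=1$ and $e^{\theta a}\le 1$, this is at most $e^{\theta a}\sum_{x'}J(x',\theta\beta)q(x'|x,a)+\epsilon$; since $J(x,\theta)\le J_\pi(x,\theta)$, letting $\epsilon\downarrow 0$ and minimising over the \emph{finite} set $A(x)$ (so the minimum is attained) gives $J\le TJ$. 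Together the two bounds prove \eqref{eq:fixedpoint}.

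I expect the main obstacle to be the careful justification of the first-stage decomposition and the policy pasting at the level of history-dependent policies: one must check that conditioning on $x_1$ really leaves a legitimate element of $\Pi$ acting on the shifted trajectory, with the discount component correctly advanced from $\theta$ to $\theta\beta$. This is exactly what the extended state space $\widetilde{X}=X\times I$ introduced before the theorem is meant to organise, and I would lean on that reformulation — under which the objective becomes a stationary multiplicative problem whose per-stage factor $e^{\theta' a}\in(0,1]$ depends on the current second component $\theta'$ — to render the measurability and consistency arguments routine rather than ad hoc.
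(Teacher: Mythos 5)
Your proof is correct, but it takes a genuinely different route from the paper's. You establish the two inequalities $J\ge TJ$ and $J\le TJ$ directly at the level of policies: the first by conditioning on the first transition and passing to the shifted policy (exactly the ``1-shifted'' policy the paper later introduces in \eqref{shift_p}, though the paper uses it for other purposes), the second by pasting $\epsilon$-optimal continuation policies behind a fixed first action --- a step that is unproblematic here because the state space is countable and each $A(x)$ is finite, so no measurable-selection issues arise, and all integrands lie in $(0,1]$ since $\theta<0$ and $r\ge 0$, which licenses every interchange of limit and expectation. The paper's proof never performs this policy-level decomposition: it truncates the reward at level $m$, invokes Proposition 3.1 of \cite{ds1} to get the fixed point $w_m=J_m$ of the truncated bounded-reward problem, passes to the monotone limit $w=\lim_{m\to\infty}w_m$ (dominated convergence plus finiteness of $A(x)$ to interchange limit and minimum), and then proves $w=J$ by iterating the resulting inequality $n$ times and letting $n\to\infty$. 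Your approach buys self-containedness --- no appeal to an external bounded-reward theorem, only the boundedness-by-one observation you correctly isolate as the substitute for bounded rewards. The paper's approach buys two by-products your argument does not deliver: the truncation scheme itself has numerical meaning (Remark 3.3), and the final iteration step shows that any solution $w$ of \eqref{eq:fixedpoint} with $w(x,\theta)\in(0,1]$ and $w(x,\theta)=1$ for $x<0$ satisfies $w\le J$, i.e., $J$ is the \emph{largest} such solution (Remark 3.2) --- a fact the paper relies on in Section 5 to prove convergence of Howard's policy improvement. So if your proof replaced the paper's, Remark 3.2 (and hence part of Section 5) would require a separate argument.
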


\begin{proof}
 Clearly, $J(x,\theta)=1$ for $x<0$ and all $\theta\in I.$
Consider the truncated payoff functions
$r_m(x,a)=\min\{m,r(x,a)\}$ with $m\in \N.$
From Proposition 3.1 in \cite{ds1} there is a unique function $w_m\in C(\widetilde{X})$ such that
\begin{equation}
\label{oen}
w_m(x,\theta)=\min_{a\in A(x)}\left[e^{\theta r_m(x,a)}\sum_{x'\in X}
w_m(x',\theta\beta)q(x'|x,a)\right]
\end{equation}
and $w_m(x,\theta)=J_m(x,\theta)$ for all  $(x,\theta)\in \widetilde{X}.$
Here, $J_m(x,\theta)$ denotes the optimal discounted payoff function with $r$ replaced by $r_m$ in
of $J(x,\theta).$
Clearly, the sequence $(w_m(x,\theta))_{m\in N}$ is non-increasing for each $(x,\theta)\in \widetilde{X}.$
Therefore, $\lim_{m\to \infty} w_m(x,\theta)=:w(x,\theta)$ exists.
It is obvious that
$$ w_m(x,\theta)=J_m(x,\theta)\ge J(x,\theta),\quad \mbox{ for }  (x,\theta)\in \widetilde{X}.$$
 Hence,
\begin{equation}
\label{o1}
w(x,\theta)\ge J(x,\theta), \quad \mbox{ for } (x,\theta)\in \widetilde{X}.
\end{equation}
On the other hand, letting $m\to\infty$ in (\ref{oen}), making use of the dominated convergence theorem
and the fact that $A(x)$ is finite for each $x\in X,$ we infer that
\begin{eqnarray}
\label{oeq}
\nonumber
w(x,\theta)&=&\lim_{m\to\infty}\min_{a\in A(x)}\left[e^{\theta r_m(x,a)}\sum_{x'\in X}
w_m(x',\theta\beta)q(x'|x,a)\right]\\
&=&\min_{a\in A(x)}\left[e^{\theta r(x,a)}\sum_{x'\in X}
w(x',\theta\beta)q(x'|x,a)\right]
\end{eqnarray}
for $(x,\theta)\in \widetilde{X}.$ Hence, for any $a\in A(x)$
$$w(x,\theta)\le e^{\theta r(x,a)}\sum_{x'\in X}
w(x',\theta\beta)q(x'|x,a).$$
 Iterating  this inequality $(n-1)$ times
we conclude that
$$w(x,\theta)\le \Eop_x^{\pi} \left(\exp\left\{\theta\sum_{k=0}^{n-1}\beta^k r(x_k,a_k)\right\}
w(x_n,\theta\beta^n)\right).$$
Since $w\le w_n\le 1,$ we have that
$$w(x,\theta)\le \Eop_x^{\pi} \left(\exp\left\{\theta\sum_{k=0}^{n-1}\beta^k r(x_k,a_k)\right\}\right).
$$
Letting $n\to\infty$ and applying the dominated convergence theorem
we have that
$w(x,\theta)\le J_\pi(x,\theta)$ for
$(x,\theta)\in \widetilde{X}.$ Since the policy $\pi$ was chosen arbitrarily, we get that
\begin{equation}
\label{o2}
w(x,\theta)\le J(x,\theta), \quad \mbox{ for } (x,\theta)\in \widetilde{X}.
\end{equation}
Now the assertion follows from (\ref{o1}) and (\ref{o2}).
\end{proof}

\begin{remark}
Among all functions $w$ which satisfy equation \eqref{eq:fixedpoint} and have the property
that $w(x,\theta)=1$ for $x<0$ and $w(x,\theta)\in (0,1]$ for all $(x,\theta)\in \widetilde{X},$
the value function $J$ is the largest solution. This fact follows from the last part of the proof.
\end{remark}

\begin{remark}
Theorem \ref{theo:fixed_point} was proved in the literature for the general state space, weakly continuous transition probabilities
and {\it bounded} costs or rewards \cite{br_mor, ds1}. However, we deal with unbounded payoffs,
therefore we have to truncate them at the level $m$
 and then let $m$ tend to infinity. Such a procedure may have a meaning from the numerical point of view.
\end{remark}

Let us now consider the policy $\pi^+=(g,g\ldots),$ where $g(x)=x^+$
for every $x\in X.$ Hence, this policy  asks the insurer to pay out
everything at each time point until ruin occurs.
Denote by $\vec{\pi}$ the ``1-shifted'' policy for $\pi=(\pi_k)_{k\in\N_0}$, that is,
$\vec{\pi}=(\vec{\pi}_k)_{k\in\N_0},$ where
\begin{equation}
\label{shift_p}
\vec{\pi}_k(\cdot|\omega_k)=\pi_{k+1}(\cdot|x_0,a_0,\omega_k)\quad\mbox{for}\quad  \omega_k\in\Omega_k
\quad\mbox{and}\quad k\in \N_0.
\end{equation}

\begin{lemma}\label{lem:bounds}
For any $x\ge 0$ and $\theta\in I$ the following inequalities hold
$$ e^{\theta x} \underline{h}(\theta)\le  J(x,\theta)\le e^{\theta x}  \overline{h}(\theta),$$
with
$$\underline{h}(\theta):= \prod_{k=1}^\infty \Eop\left(\exp\{\theta\beta^k Z_k^+\}\right)
  \quad\mbox{and}\quad \overline{h}(\theta):=
  \sum_{m=-\infty}^\infty
  \Eop_m^{\pi^+}\left(\exp\left\{\theta\sum_{k=1}^{\tau-1} \beta^k
   x_k\right\}\right)q_m,
 $$
where $x_1=m$ and the empty sum is $0.$
 \end{lemma}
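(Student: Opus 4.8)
The plan is to prove the two inequalities separately: the upper bound by evaluating $J_\pi$ at the single distinguished policy $\pi^+$, and the lower bound by a pathwise estimate on the discounted dividend stream that holds uniformly over all $\pi\in\Pi$. Throughout I will use that $\theta<0$, so that $u\mapsto e^{\theta u}$ is decreasing; consequently an upper (resp.\ lower) pathwise bound on $\sum_{k=0}^{\tau-1}\beta^k a_k$ turns, after applying $e^{\theta\,\cdot}$ and taking expectations, into a lower (resp.\ upper) bound on $J_\pi$.

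For the upper bound I would simply insert $\pi^+$ into the infimum. Under $\pi^+$ one has $a_k=x_k$ and hence $x_{k+1}=Z_{k+1}$ for every $k$ before ruin, so the realised discounted dividends equal $x+\sum_{k=1}^{\tau-1}\beta^k x_k$. Factoring out the deterministic term $e^{\theta x}$ and conditioning on $x_1=Z_1=m$, the Markov property together with the i.i.d.\ structure of $(Z_k)$ identifies the remaining expectation with $\sum_{m}\Eop_m^{\pi^+}\!\big(\exp\{\theta\sum_{k=1}^{\tau-1}\beta^k x_k\}\big)q_m=\overline{h}(\theta)$ (for $m<0$ ruin is immediate and the empty-sum convention gives the value $1$). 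Thus $J(x,\theta)\le J_{\pi^+}(x,\theta)=e^{\theta x}\overline{h}(\theta)$.

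The heart of the argument is the lower bound. Fix an arbitrary $\pi\in\Pi$ and introduce the auxiliary quantity $R_n:=\sum_{k=0}^{n-1}\beta^k a_k+\beta^n x_n$, the discounted dividends paid plus the discounted current surplus. Using $x_{n+1}=x_n-a_n+Z_{n+1}$ a one-line computation gives $R_{n+1}-R_n=-\beta^n(1-\beta)(x_n-a_n)+\beta^{n+1}Z_{n+1}$, and since $0\le a_n\le x_n$ before ruin this is at most $\beta^{n+1}Z_{n+1}^+$. Telescoping from $R_0=x$ yields $R_n\le x+\sum_{k=1}^n\beta^k Z_k^+$, and because $a_n\le x_n$ forces $\sum_{k=0}^{n}\beta^k a_k\le R_n$, taking $n=\tau-1$ gives the pathwise bound
$$\sum_{k=0}^{\tau-1}\beta^k a_k\le x+\sum_{k=1}^{\tau-1}\beta^k Z_k^+\le x+\sum_{k=1}^{\infty}\beta^k Z_k^+,$$
which holds on $\{\tau<\infty\}$ and, by passing to the limit in $n$, also on $\{\tau=\infty\}$. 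Applying the decreasing map $e^{\theta\,\cdot}$, taking $\Eop_x^\pi$, and using the independence of the $Z_k$ to factor $\Eop\big(\exp\{\theta\sum_{k\ge1}\beta^k Z_k^+\}\big)=\prod_{k\ge1}\Eop(\exp\{\theta\beta^k Z_k^+\})=\underline{h}(\theta)$ then gives $J_\pi(x,\theta)\ge e^{\theta x}\underline{h}(\theta)$; the infimum over $\pi$ finishes the bound.

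The step I expect to be most delicate is justifying the convergence of the infinite product defining $\underline{h}(\theta)$ and the interchange of limit and expectation: one must check that $\sum_{k\ge1}\beta^k Z_k^+<\infty$ almost surely and that $\underline{h}(\theta)>0$. Both follow from assumption (A1): since $1-e^{-u}\le u$ for $u\ge0$, one has $0\le 1-\Eop(\exp\{\theta\beta^k Z_k^+\})\le |\theta|\beta^k\,\Eop Z^+$, whose sum over $k$ is finite, so the product converges to a strictly positive limit, and the passage from finite products to the infinite sum is handled by dominated convergence with dominating function $1$. The remaining points—that $\tau\ge1$ because $x_0=x\ge0$, and that the conditioning argument in the upper bound is legitimate—are routine given the canonical construction of $\Pop_x^\pi$.
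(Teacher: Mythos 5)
Your proposal is correct, and while your upper bound coincides with the paper's (both insert $\pi^+$ into the infimum, factor out $e^{\theta x}$, and condition on $x_1=Z_1=m$ to identify $\overline{h}(\theta)$), your lower bound takes a genuinely different route. The paper proves, by induction on the horizon $n$, the finite-horizon inequality
$$ e^{\theta x}\prod_{k=1}^{n-1}\Eop\left(\exp\{\theta\beta^k Z_k^+\}\right)\;\le\;
\Eop_x^\pi\left(\exp\Big\{\theta\sum_{k=0}^{n-1}\beta^k r(x_k,a_k)\Big\}\right), $$
using conditioning on $(a_0,x_1)$, the shifted policy $\vec{\pi}$, and an explicit one-step minimisation over $a\in A(x)$ that collapses to the factor $e^{\theta x}\Eop e^{\theta\beta Z_1^+}$; it then lets $n\to\infty$ by dominated convergence. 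You instead establish the sure, policy-uniform pathwise bound $\sum_{k=0}^{\tau-1}\beta^k a_k\le x+\sum_{k\ge 1}\beta^k Z_k^+$ by telescoping $R_n=\sum_{k=0}^{n-1}\beta^k a_k+\beta^n x_n$, and only afterwards apply the decreasing map $u\mapsto e^{\theta u}$ and the independence factorisation. Your argument is more elementary: no dynamic-programming induction, no shifted policies, and it explains conceptually why $\underline{h}$ appears (in discounted terms one can never pay out more than the initial capital plus the discounted positive income). What the paper's route buys in exchange is that it never leaves the canonical Ionescu--Tulcea trajectory space, since it only uses one-step conditional laws; your argument implicitly requires the full driving sequence $(Z_k)_{k\ge 1}$ to exist beyond the ruin time $\tau$ (on the trajectory space the dynamics freeze after ruin, so the increments $Z_k$ for $k>\tau$ are not recoverable), which is harmless but deserves a sentence: one constructs the controlled process from the exogenous i.i.d.\ sequence as in (\ref{maineq}), which leaves the law of $(x_k,a_k)_{k\in\N_0}$, and hence $J_\pi$, unchanged. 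Your convergence checks for the infinite product (via $1-e^{-u}\le u$ and (A1)) are sound; note that strict positivity of $\underline{h}(\theta)$ is not needed for the inequality itself, only later (Theorem \ref{theo:xi_finite}) when logarithms are taken, which is exactly where the paper records it as well.
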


\begin{proof} We start with the upper bound. Since $x\ge 0,$ then $\tau\ge 1.$
For the policy $\pi^+$ we have that
\begin{eqnarray*}
J(x,\theta)&\le& J_{\pi^+}(x,\theta)=\Eop_x^{\pi^+} \left(\exp\left\{\theta\sum_{k=0}^{\tau-1} \beta^k
   x_k\right\}\right)
   =e^{\theta x}\Eop_x^{\pi^+} \left(\exp\left\{\theta\sum_{k=1}^{\tau-1} \beta^k
   x_k\right\}\right)\\
&=& e^{\theta x} \Eop_x^{\pi^+}\left[ \Eop_x^{\pi^+} \left(\exp\left\{\theta\sum_{k=1}^{\tau-1} \beta^k
   x_k\right\}\Big|a_0,x_1\right)\right]\quad(\mbox{under $\pi^+$ we have that  $x_1=Z_1$})\\
&=& e^{\theta x}\Eop_{x}^{\pi^+}  \left[\Eop_{Z_1}^{\vec{\pi}^+}
\left(\exp\left\{\theta\sum_{k=1}^{\tau-1} \beta^k
   x_k\right\}\right)\right]\\
&=& e^{\theta x}
\left(\sum_{m=-\infty}^\infty \Eop_{m}^{\pi^+} \left(\exp\left\{\theta\sum_{k=1}^{\tau-1} \beta^k
   x_k\right\}\right)q_m\right),
\end{eqnarray*}
where in the last equality we make use of the fact that $\Pop(Z_1=m)=q_m$ and  $\vec{\pi}^+=\pi^+.$

On the other hand, the lower bound can be obtained as follows.  First, we claim that for $\pi\in\Pi$ and
$(x,\theta)\in\widetilde{X}$ with $x\ge 0$
\begin{equation}
\label{ind}
e^{\theta x} \prod_{k=1}^{n-1} \Eop\left(\exp\{\theta\beta^k Z_k^+\}\right)
\le
\Eop_{x}^{\pi}\left(\exp\left\{\theta\sum_{k=0}^{n-1}\beta^k r(x_k,a_k)\right\}\right).
\end{equation}
We proceed by induction.  Clearly, $e^{\theta x}\le \min_{a\in A(x)} e^{\theta r(x,a)}.$
Assume now that (\ref{ind}) holds for some $n\ge 1$ and every $x\ge 0$ and $\theta\in I.$
Let $\pi=(\pi_n)_{n\in\N_{0}}$ be any policy. Then, it follows that
 \begin{eqnarray*}
 \lefteqn{
\Eop_{x}^{\pi}\left(\exp\left\{\theta\sum_{k=0}^{n}\beta^k r(x_k,a_k)\right\}\right)
\ge e^{\theta r(x,\pi_0(x))}\Eop_x^{\pi}\left[
\Eop_x^{\pi}\left(\exp\left\{\theta\sum_{k=1}^{n}\beta^k r(x_k,a_k)\right\}\Big|a_0,x_1\right) \right] }\\
 &\ge&
 \min_{a\in A(x)}e^{\theta r(x,a)}\Eop_{x}^{\pi}\left[ 1[x_1\ge 0]\Eop_{x_1}^{\vec{\pi}}
 \left(\exp\left\{\theta\sum_{k=1}^{n}\beta^k r(x_k,a_k)\right\}\right)+1[x_1< 0]\right]\\
 &\ge& \min_{a\in A(x)}e^{\theta a}\Eop_{x}^{\pi}\left[1[x_1\ge 0]e^{\theta\beta x_1}
 \prod_{k=2}^{n} \Eop\left(\exp\{\theta\beta^{k} Z_k^+\}\right)+1[x_1< 0]\right]
 \quad\mbox{(by (\ref{ind}))}\\
 &\ge& \min_{a\in A(x)}e^{\theta a}\Eop_{x}^{\pi}\left[1[x_1\ge 0]e^{\theta\beta x_1}+1[x_1< 0]\right]
 \prod_{k=2}^{n}  \Eop\left(\exp\{\theta\beta^{k} Z_k^+\}\right).
 \end{eqnarray*}
 Furthermore, we have that
  \begin{eqnarray*}
 \lefteqn{\min_{a\in A(x)}e^{\theta a}\Eop_{x}^{\pi}\left[1[x_1\ge 0]e^{\theta\beta x_1}+1[x_1< 0]\right]}\\
 &\ge&
  \min_{a\in A(x)} e^{\theta a} \Big( \sum_{k=a-x}^\infty e^{\theta\beta (x-a+k)}
  q_k+ \sum_{k=-\infty}^{a-x-1}q_k\Big) \\
   &\ge & \min_{a\in A(x)} e^{\theta a} \Big( \sum_{k=1}^\infty e^{\theta\beta (x-a+k)} q_k+
   \sum_{k=-\infty}^{0}e^{\theta\beta (x-a)} q_k\Big) \\
   &= &   \min_{a\in A(x)} e^{\theta a(1-\beta)} \Big( \sum_{k=1}^\infty e^{\theta\beta (x+k)} q_k+
    \sum_{k=-\infty}^{0}e^{ \theta \beta x}q_k\Big) \\
   &=&  e^{\theta x(1-\beta)} e^{\theta\beta x} \Eop e^{\theta\beta Z_1^+}=
   e^{\theta x} \Eop e^{\theta\beta Z_1^+}.
    \end{eqnarray*}
    Hence, we conclude that
  \begin{eqnarray*}
 \Eop_{x}^{\pi}\left(\exp\left\{\theta\sum_{k=0}^{n}\beta^k r(x_k,a_k)\right\}\right) &\ge&
 e^{\theta x} \Eop e^{\theta\beta Z_1^+}
  \prod_{k=2}^{n}  \Eop\left(\exp\{\theta\beta^{k} Z_k^+\}\right)\\&=& e^{\theta x}
   \prod_{k=1}^{n}  \Eop\left(\exp\{\theta\beta^{k} Z_k^+\}\right).
       \end{eqnarray*}
       Therefore, (\ref{ind}) holds for every $n\in \N,$ $x\ge 0,$  $\theta\in I$ and $\pi\in\Pi.$
       Now letting $n\to\infty$ in (\ref{ind})  and making use
       of the dominated convergence theorem we obtain
       the lower bound for $J(x,\theta).$
\end{proof}

\begin{remark}
Note that since $\beta<1,$ we obtain by Jensen's inequality and assumption (A1) that
$$ \underline{h}(\theta) \ge \exp\left(\theta \sum_{k=1}^\infty \beta^k \Eop Z_k^+\right) > 0,\quad \theta\in I.$$
This observation is essential in Theorem \ref{theo:xi_finite}, where we have to take the logarithm of $\underline{h}$.
Also note that $\bar{h}(\theta)\le 1$ since $\theta \in I$.
\end{remark}

Let $x\ge 0.$ For any $\theta \in I$ let us denote
$$ G(x,\theta) := \sum_{k=-x}^\infty J(x+k,\theta) q_k+\sum_{k=-\infty}^{-x-1} q_k,$$
then
\begin{equation}
\label{optg}
 J(x,\theta) = \min_{a\in A(x)} \left[e^{\theta a} G(x-a,\theta\beta)\right],\quad x\ge 0.
 \end{equation}

\begin{lemma}\label{lem:w_decreasing}
The function $J(\cdot,\theta)$ is decreasing for each $\theta\in I$ and
$$ J(x,\theta) \le e^{\theta (x-v)} J(v,\theta)$$ for $ x\ge v\ge 0.$
\end{lemma}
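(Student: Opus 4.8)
The plan is to prove the exponential growth bound first and then read off the monotonicity as a free consequence. Indeed, once the inequality $J(x,\theta)\le e^{\theta(x-v)}J(v,\theta)$ is established for $x\ge v\ge 0$, monotonicity follows immediately: since $\theta<0$ and $x-v\ge 0$ we have $e^{\theta(x-v)}\le 1$, while Lemma~\ref{lem:bounds} together with the subsequent remark guarantees $J(v,\theta)>0$, so $J(x,\theta)\le J(v,\theta)$. Thus the whole lemma reduces to the single inequality, which I would attack through the optimality equation in the form (\ref{optg}).

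The key observation is that raising the initial surplus from $v$ to $x$ can be ``absorbed'' by paying the extra $x-v$ units out immediately, which leaves the post-decision surplus unchanged. Concretely, fix $\theta\in I$ and $x\ge v\ge 0$. Since $A(v)=\{0,\ldots,v\}$ is finite, the minimum in (\ref{optg}) at $(v,\theta)$ is attained at some $a^*\in A(v)$, so that $J(v,\theta)=e^{\theta a^*}G(v-a^*,\theta\beta)$. Now set $\tilde a:=a^*+(x-v)$. Because $0\le a^*\le v$, one checks $x-v\le\tilde a\le x$, hence $\tilde a\in A(x)$ is a feasible action for the $x$-problem; moreover its post-decision state is $x-\tilde a=v-a^*\ge 0$, exactly the post-decision state of the minimizing action at $v$. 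Feeding $\tilde a$ into (\ref{optg}) and using $G(x-\tilde a,\theta\beta)=G(v-a^*,\theta\beta)$ then yields
$$J(x,\theta)\le e^{\theta\tilde a}G(x-\tilde a,\theta\beta)=e^{\theta(x-v)}\,e^{\theta a^*}G(v-a^*,\theta\beta)=e^{\theta(x-v)}J(v,\theta),$$
which is precisely the claimed bound.

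There is no serious obstacle here; the only points requiring care are that the minimizer $a^*$ exists (guaranteed by finiteness of $A(v)$) and that the shifted action $\tilde a$ is admissible with $x-\tilde a\ge 0$, so that $G(x-\tilde a,\theta\beta)$ is evaluated at a nonnegative argument and the defining formula for $G$ applies. Intuitively the argument is a coupling: starting from $x$, pay out the surplus difference $x-v$ at time $0$ and thereafter mimic a (near-)optimal strategy for $v$; the two surplus trajectories then coincide from time $1$ on, the ruin times agree, and the discounted dividend streams differ only by the deterministic amount $x-v$ disbursed at time $0$, contributing the factor $e^{\theta(x-v)}$. I would present the equation-based version above as the formal proof, since the optimality equation (\ref{optg}) is already available and renders the coupling rigorous in a single line.
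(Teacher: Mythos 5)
Your proof is correct and follows essentially the same route as the paper: both arguments work from the optimality equation \eqref{optg} and exploit that shifting an action of the $v$-problem by $x-v$ gives a feasible action for the $x$-problem with the same post-decision state, contributing the factor $e^{\theta(x-v)}$ (the paper phrases this by splitting the minimum over $A(x)$ into two blocks and identifying the lower block with $e^{\theta(x-v)}J(v,\theta)$, while you simply plug in the shifted minimizer $\tilde a=a^*+(x-v)$). The only cosmetic difference is that the paper concludes the strict inequality $J(x,\theta)<J(x-1,\theta)$, which your argument also yields once you note $e^{\theta(x-v)}<1$ strictly for $x>v$ together with $J(v,\theta)>0$.
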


\begin{proof}
Suppose $0\le v< x.$ Then, it follows that
\begin{eqnarray*}
J(x,\theta)&=&\min\left\{ G(x,\theta\beta), \ldots, e^{\theta(x-v-1)}
G(v+1,\theta\beta),\min\left\{e^{\theta (x-v)} G(v,\theta\beta),
\ldots ,e^{\theta (x-0)} G(0,\theta\beta)\right\}\right\}\\
 &  = & \min\left\{ G(x,\theta\beta), \ldots,
 e^{\theta(x-v-1)} G(v+1,\theta\beta),e^{\theta (x-v)} J(v,\theta)\right\}\\
 &  \le & e^{\theta (x-v)} J(v,\theta).
\end{eqnarray*}
Observe that for $v=x-1$ we obtain from the above inequality that $$J(x,\theta)\le e^\theta J(x-1,\theta)
< J(x-1,\theta).$$
 This fact finishes the proof.
\end{proof}

Now let $f^*:\widetilde{X}\mapsto A$ be the  largest minimiser of the right-hand side in
(\ref{optg}) for $x\ge 0$ and let $f^*(x,\theta)=0$ for $x<0$ and all $\theta\in I.$
Then, $f^*(\cdot,\theta)\in \Gamma.$

\begin{lemma}\label{lem:no_dividend_after_payout}
For $x\ge 0$ it holds that $f^*(x-f^*(x,\theta),\theta)=0$ and
$$ J(x,\theta) = e^{\theta f^*(x,\theta)} J(x-f^*(x,\theta),\theta).$$
\end{lemma}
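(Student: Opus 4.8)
The plan is to exploit the ``largest minimiser'' property of $f^*$ together with the multiplicative structure of the optimality equation (\ref{optg}). Write $a^*:=f^*(x,\theta)$ and set $y:=x-a^*\ge 0$. By the very definition of $f^*$ as a minimiser of the right-hand side of (\ref{optg}), we have
$$J(x,\theta)=e^{\theta a^*}G(y,\theta\beta).$$
Next, let $b:=f^*(y,\theta)$ denote the largest minimiser associated with the state $y$ at the same level $\theta$; applying (\ref{optg}) to $y$ gives $J(y,\theta)=e^{\theta b}G(y-b,\theta\beta)$, and choosing the action $a=0$ in the same equation yields the trivial bound $J(y,\theta)\le G(y,\theta\beta)$. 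The goal is to show $b=0$.

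The heart of the argument is a feasibility-plus-optimality comparison performed back at the original state $x$. Since $b\in A(y)=\{0,\ldots,y\}$ we have $b\le y$, hence $a^*+b\le a^*+y=x$, so $a^*+b\in A(x)$ is an admissible action at $x$. Evaluating the objective in (\ref{optg}) at this action and factoring the exponential, and recalling $x-a^*-b=y-b$,
$$e^{\theta(a^*+b)}G\big(x-a^*-b,\theta\beta\big)=e^{\theta a^*}\Big(e^{\theta b}G(y-b,\theta\beta)\Big)=e^{\theta a^*}J(y,\theta)\le e^{\theta a^*}G(y,\theta\beta)=J(x,\theta),$$
where the inequality uses $J(y,\theta)\le G(y,\theta\beta)$ together with $e^{\theta a^*}>0$. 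On the other hand, $J(x,\theta)$ is the minimum over all of $A(x)$, so the left-hand side is simultaneously $\ge J(x,\theta)$. Thus $a^*+b$ is itself a minimiser of the right-hand side of (\ref{optg}) at $x$. Because $a^*$ was chosen to be the \emph{largest} minimiser, this forces $a^*+b\le a^*$, i.e.\ $b=0$. This establishes $f^*(x-f^*(x,\theta),\theta)=f^*(y,\theta)=0$.

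With $b=0$ in hand the second assertion is immediate: $J(y,\theta)=e^{\theta\cdot 0}G(y,\theta\beta)=G(y,\theta\beta)$, and combining this with the opening identity $J(x,\theta)=e^{\theta a^*}G(y,\theta\beta)$ gives $J(x,\theta)=e^{\theta a^*}J(y,\theta)$, the claimed relation. I do not anticipate analytic difficulties: everything reduces to the algebra of the exponential factor and the fact that $e^{\theta\,\cdot}$ never vanishes, so the sign of $\theta<0$ plays no role. The one delicate point, and the step I would write most carefully, is the maximality argument: one must check that $a^*+b$ is genuinely feasible (handled by $b\le y$) and that the displayed chain is pinched to an equality, so that the maximality of $a^*$ can be invoked to conclude $b=0$ rather than merely $b\ge 0$.
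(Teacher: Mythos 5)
Your proof is correct, and it reaches the conclusion by a different mechanism than the paper. The paper pinches the chain $J(x,\theta)=e^{\theta f^*(x,\theta)}G(x-f^*(x,\theta),\theta\beta)\ge e^{\theta f^*(x,\theta)}J(x-f^*(x,\theta),\theta)\ge J(x,\theta)$ by invoking the monotonicity result Lemma \ref{lem:w_decreasing}; this yields that $a=0$ is \emph{a} minimiser at the state $y=x-f^*(x,\theta)$, and since $f^*$ is the largest minimiser the paper must then show $a=0$ is the \emph{only} minimiser at $y$, which it does via a case distinction ($f^*(x,\theta)=x$ versus $f^*(x,\theta)<x$) and the strict inequalities that the largest-minimiser property forces at $x$. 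You avoid both of these ingredients: by composing actions you show that $a^*+b$ (with $b=f^*(y,\theta)$) is feasible and is itself a minimiser at $x$, so the maximality of $a^*$ at $x$ immediately gives $b\le 0$, hence $b=0$. In effect, you transfer the largest-minimiser argument from the state $y$ (where the paper needs uniqueness of the zero minimiser) to the state $x$ (where maximality alone suffices), and you inline the only instance of Lemma \ref{lem:w_decreasing} that is actually needed — your displayed chain together with $e^{\theta(a^*+b)}G(x-a^*-b,\theta\beta)\ge J(x,\theta)$ re-derives exactly the inequality $J(x,\theta)\le e^{\theta a^*}J(y,\theta)$. The paper's route buys reuse of an already-proved lemma and, as a by-product, the strictly stronger fact that $0$ is the unique minimiser at $y$ when $f^*(x,\theta)<x$; your route is more self-contained and shorter, needing only equation \eqref{optg} and the definition of $f^*$.
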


\begin{proof}
By (\ref{optg}) we have that  $J(x-f^*(x,\theta),\theta) \le G(x-f^*(x,\theta),\theta\beta).$
By Lemma \ref{lem:w_decreasing}  (set $v:= x-f^*(x,\theta)$) it follows that
\begin{eqnarray*}
J(x,\theta) &=& e^{\theta f^*(x,\theta)} G(x-f^*(x,\theta),\theta\beta)
\ge e^{\theta f^*(x,\theta)} J(x-f^*(x,\theta),\theta)\\
   &\ge &  J(x,\theta).
\end{eqnarray*}
Thus we have equality and, in particular, $J(x-f^*(x,\theta),\theta)=G(x-f^*(x,\theta),\theta\beta),$
which implies that $a=0$ minimises the expression $e^{\theta a}G(x-f^*(x,\theta)-a,\theta\beta).$
We claim that $a=0$ is the only minimiser of the above expression. Note that, if $f^*(x,\theta)=x,$
then $J(x-f^*(x,\theta),\theta)=J(0,\theta)$ and the result holds true. If, on the other hand,
$f^*(x,\theta)<x,$ then
$$J(x,\theta)=e^{\theta f^*(x,\theta)}G(x-f^*(x,\theta),\theta\beta)<
e^{\theta (f^*(x,\theta)+a)}G(x-f^*(x,\theta)-a,\theta\beta)$$
for $a=1,\ldots,x-f^*(x,\theta).$ This fact implies that $a=0$ is indeed the only minimiser
and, consequently,
 $f^*(x-f^*(x,\theta),\theta)=0$.
\end{proof}

\begin{theorem}\label{theo:xi_finite}
Let $\xi(\theta) := \sup\{x\in\N_0 : \;f^*(x,\theta)=0\}$. Then
$\xi^*:=\sup_{\theta\in I}\xi(\theta)<\infty$ and
$$ f^*(x,\theta) = x-\xi(\theta),\quad \mbox{for all } \;x> \xi(\theta).$$
\end{theorem}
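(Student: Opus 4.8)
The plan is to recast the entire statement through the single function $\varphi(y,\theta):=e^{-\theta y}G(y,\theta\beta)$. Dividing \eqref{optg} by $e^{\theta x}$ and substituting $y=x-a$ gives $J(x,\theta)=e^{\theta x}\min_{0\le y\le x}\varphi(y,\theta)$, and since $f^*$ is the \emph{largest} minimising action $a$, it corresponds to the \emph{smallest} minimising $y=x-a$. Hence $f^*(x,\theta)=0$ holds precisely when $x$ itself is the smallest minimiser on $\{0,\dots,x\}$, i.e. when $\varphi(x,\theta)<\varphi(y,\theta)$ for every $0\le y<x$. In words, $f^*(x,\theta)=0$ means that $x$ is a strict running minimum (a ``record low'') of $y\mapsto\varphi(y,\theta)$, so $\xi(\theta)$ is exactly the position of the last such record low.

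I would first prove the displayed structural identity, which needs only $\xi(\theta)<\infty$. Because $\xi(\theta)$ is the last record low, one shows by induction on $x>\xi(\theta)$ that $\min_{0\le y<x}\varphi(y,\theta)=\varphi(\xi(\theta),\theta)$: the base case is the definition of $\xi(\theta)$ (strictly below all earlier values), and in the inductive step $x$ is not a record low, so $\varphi(x,\theta)\ge\min_{0\le y<x}\varphi(y,\theta)=\varphi(\xi(\theta),\theta)$, which propagates the claim. Consequently $\varphi(\xi(\theta),\theta)=\min_{0\le y\le x}\varphi(y,\theta)$, and since $\varphi(y,\theta)>\varphi(\xi(\theta),\theta)$ strictly for all $y<\xi(\theta)$, the smallest minimiser on $\{0,\dots,x\}$ is exactly $\xi(\theta)$. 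Translating back via $a=x-y$ yields $f^*(x,\theta)=x-\xi(\theta)$ for all $x>\xi(\theta)$, the second assertion.

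It remains to show $\xi^*<\infty$. Feeding the lower bound of Lemma~\ref{lem:bounds} (via $J(x',\theta\beta)\ge e^{\theta\beta x'}\underline{h}(\theta\beta)$ for $x'\ge 0$) into $G$ gives $\varphi(x,\theta)\ge\underline{h}(\theta\beta)\,e^{-\theta(1-\beta)x}\sum_{k\ge -x}e^{\theta\beta k}q_k$; since $-\theta(1-\beta)>0$ and, choosing by (A2) some $k_0<0$ with $q_{k_0}>0$, the sum is $\ge q_{k_0}$ once $x\ge|k_0|$, we obtain $\varphi(x,\theta)\to\infty$. The upper bound of Lemma~\ref{lem:bounds} gives $\min_{0\le y\le x}\varphi(y,\theta)=e^{-\theta x}J(x,\theta)\le\overline{h}(\theta)\le 1$, so the running minimum stays bounded while $\varphi(x,\theta)$ diverges; hence record lows cease and $\xi(\theta)<\infty$. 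Quantitatively, $f^*(x,\theta)=0$ forces $\varphi(x,\theta)<\varphi(0,\theta)=J(0,\theta)$, and taking logarithms (here the positivity of $\underline{h}$, noted in the Remark, is essential) produces an explicit estimate of the form $\xi(\theta)\le N(\theta)/\bigl(-\theta(1-\beta)\bigr)$.

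The main obstacle is \emph{uniformity} in $\theta$. For $\theta\in[\gamma,-\delta]$ bounded away from $0$ the denominator $-\theta(1-\beta)$ is bounded below, and $\underline{h}(\theta\beta)\ge\exp\!\bigl(\theta\tfrac{\beta^2}{1-\beta}\E Z^+\bigr)\ge\exp\!\bigl(\gamma\tfrac{\beta^2}{1-\beta}\E Z^+\bigr)>0$ by (A1), so the explicit bound is uniformly finite. The delicate regime is $\theta\uparrow 0^-$, where both the growth rate $-\theta(1-\beta)$ of $\varphi$ and the gap between the running minimum and $\varphi(0,\theta)$ degenerate, yielding a $0/0$ indeterminacy in the crude estimate. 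I expect this to be the crux: the remedy is to keep the full income mass in the lower estimate of $G$, so that the factor $\sum_{k\ge -x}e^{\theta\beta k}q_k\to 1$ rather than being replaced by the smaller constant $\sum_{k\ge 0}e^{\theta\beta k}q_k$, and then to verify that $N(\theta)$ vanishes at a rate matching $-\theta(1-\beta)$; alternatively, argue by contradiction along a sequence $\theta_n\uparrow 0$ with $\xi(\theta_n)\to\infty$ and exclude it through an (upper semi-)continuity argument or the risk-neutral limit.
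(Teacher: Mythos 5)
Your reduction to the function $\varphi(y,\theta)=e^{-\theta y}G(y,\theta\beta)$ is correct and gives a clean, self-contained proof of the second assertion: identifying $f^*(x,\theta)=0$ with ``$x$ is a strict running minimum of $\varphi(\cdot,\theta)$'', together with your induction, replaces the paper's route through Lemmas \ref{lem:w_decreasing} and \ref{lem:no_dividend_after_payout} and yields $f^*(x,\theta)=x-\xi(\theta)$ for all $x>\xi(\theta)$. Your divergence argument ($\varphi(x,\theta)\to\infty$ for fixed $\theta$, while the running minimum $e^{-\theta x}J(x,\theta)$ stays below $\overline{h}(\theta)\le 1$) also correctly proves the pointwise statement $\xi(\theta)<\infty$ for each fixed $\theta$.

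However, the first assertion of the theorem is the \emph{uniform} bound $\xi^*=\sup_{\theta\in I}\xi(\theta)<\infty$ over the half-open interval $I=[\gamma,0)$, and this you do not prove: you correctly flag the $\theta\uparrow 0^-$ degeneracy as the crux, but then only sketch two candidate remedies (``verify that $N(\theta)$ vanishes at a rate matching $-\theta(1-\beta)$'', or a contradiction via semicontinuity / the risk-neutral limit) without carrying either out. This is exactly where the paper does its real work. Keeping the full income mass (your first remedy) gives, for any $x$ with $f^*(x,\theta)=0$, the two-sided estimate $e^{\theta\beta x}\underline{h}(\theta)\le J(x,\theta)\le e^{\theta x}\overline{h}(\theta)$, hence $x\le s(\theta):=\bigl(\ln\overline{h}(\theta)-\ln\underline{h}(\theta)\bigr)/\bigl(\theta(\beta-1)\bigr)$; the decisive point is that $s$ is continuous and finite on $I$ \emph{and} that $\lim_{\theta\to 0^-}s(\theta)$ exists finitely, which the paper obtains by L'H\^opital's rule: numerator and denominator both vanish linearly at $\theta=0^-$, and the limit equals
$$\Bigl(\sum_{k=1}^\infty \beta^k\Eop (Z_k^+) - \sum_{m}\Eop_m^{\pi^+}\Bigl(\sum_{k=1}^{\tau-1}\beta^k x_k\Bigr)q_m\Bigr)\Big/(1-\beta),$$
which is finite by (A1). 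Carrying this out requires differentiating $\ln\underline{h}$ and $\ln\overline{h}$ through the infinite product and sum and checking finiteness of the resulting expressions — a concrete computation your proposal defers. Without it (or an equivalent uniform rate argument) the claim $\xi^*<\infty$ remains open; note that it is not a cosmetic strengthening: the uniform bound is what Corollary \ref{ruin_0} uses to show that ruin occurs with probability one under $\pi^*$, so the pointwise statement alone does not suffice for the rest of the paper.
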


\begin{proof} Fix $\theta\in I$ and let $x\ge 0$ be such that $f^*(x,\theta)=0.$ Note that
such $x$ exists for each $\theta\in I$, because $f^*(0,\theta)=0$ for all $\theta\in I.$
From (\ref{optg}) we obtain that
\begin{equation}
\label{ab}
J(x,\theta)=\sum_{k=-x}^{\infty}J(x+k,\theta\beta)q_k+
\sum_{k=-\infty}^{-x-1}q_k.
\end{equation}
Furthermore, by (\ref{ab}), Lemmas \ref{lem:bounds} and \ref{lem:w_decreasing} we have that
\begin{eqnarray*}
J(x,\theta)&\ge& \sum_{k=1}^\infty J(x+k,\theta\beta)q_k+ \sum_{k=-\infty}^{0} J(x,\theta\beta)q_k\\
&\ge& \sum_{k=1}^\infty e^{\theta\beta (x+k)}\underline{h}(\theta\beta)q_k+\sum_{k=-\infty}^{0}
 e^{\theta\beta x}\underline{h}(\theta\beta)q_k\\
 &=& e^{\theta\beta x} \underline{h}(\theta\beta)\left(\sum_{k=1}^\infty e^{\theta\beta k}q_k+
 \sum_{k=-\infty}^{0}q_k\right)= e^{\theta\beta x} \underline{h}(\theta).
\end{eqnarray*}
Hence,
$$e^{\theta\beta x} \underline{h}(\theta)\le J(x,\theta)\le e^{\theta x}\overline{h}(\theta),$$
which implies that
$$x\le \frac{\ln\overline{h}(\theta)-\ln\underline{h}(\theta)}{\theta(\beta-1)}=:s(\theta).$$
The function $s(\cdot)$ is continuous on $I$ and is finite for each $\theta\in I.$
Additionally,
\begin{eqnarray*}
\lim_{\theta\to 0^-} s(\theta)&=&\lim_{\theta\to 0^-} \left(\frac{\overline{h}'(\theta)}
{\overline{h}(\theta)}-\frac{\underline{h}'(\theta)}{\underline{h}(\theta)} \right)/(\beta-1)\\
&=&
\left(\sum_{k=1}^\infty \beta^k\Eop (Z_k^+) -
\sum_{m=0}^\infty \Eop_m^{\pi^+}\left(\sum_{k=1}^{\tau-1}\beta^k x_k\right)q_m\right)/(1-\beta)<+\infty,
\end{eqnarray*}
which follows by assumption (A1).
Thus, we have shown that $\sup_{\theta\in I}\xi(\theta)<+\infty$. Now let $x> \xi(\theta)$.
We know from Lemma \ref{lem:no_dividend_after_payout} that $f^*(x-f^*(x,\theta),\theta)=0,$
which implies by definition of $\xi(\theta)$ that $f^*(x,\theta)\ge x-\xi(\theta).$
On the other hand, by \eqref{optg} we obtain  that
\begin{eqnarray*}
  J(\xi(\theta),\theta) &\le & e^{\theta (f^*(x,\theta)-(x-\xi(\theta)))}
  G\Big(\xi(\theta)-f^*(x,\theta)+\big(x-\xi(\theta)\big),\theta\beta\Big) \\
   &=& J(x,\theta) e^{-\theta (x-\xi(\theta))} \le J(\xi(\theta),\theta),
\end{eqnarray*}
where the last inequality follows from Lemma \ref{lem:w_decreasing}.
Thus, because $f^*$ is the largest minimiser of (\ref{optg}), we obtain
$$0=f^*(\xi(\theta),\theta) \ge f^*(x,\theta)-(x-\xi(\theta))\ge 0,$$
which implies that $f^*(x,\theta)=x-\xi(\theta).$
\end{proof}

\begin{lemma}\label{lem:band1}
Let $x_0\ge 0$. If $f^*(x_0,\theta)=a_0$ and $f^*(x_0+1,\theta)>0$, then $f^*(x_0+1,\theta)=a_0+1.$
\end{lemma}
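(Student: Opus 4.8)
The plan is to reparametrise the minimisation in \eqref{optg} by the \emph{post-dividend level} rather than by the dividend itself. Writing $y = x-a$ for the surplus remaining after the payout, so that $a\in A(x)=\{0,\ldots,x\}$ corresponds to $y\in\{0,\ldots,x\}$ and $e^{\theta a}G(x-a,\theta\beta)=e^{\theta x}e^{-\theta y}G(y,\theta\beta)$, equation \eqref{optg} becomes
$$ J(x,\theta)=e^{\theta x}\min_{0\le y\le x}\phi(y),\qquad \phi(y):=e^{-\theta y}G(y,\theta\beta). $$
The decisive observation is that $\phi$ does not depend on $x$: increasing the starting surplus merely enlarges the constraint set $\{0,\ldots,x\}$ over which one and the same function $\phi$ is minimised. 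Moreover, since $a=x-y$, choosing $f^*$ to be the \emph{largest} minimiser in $a$ is the same as selecting the \emph{smallest} minimiser of $\phi$ over $\{0,\ldots,x\}$; I denote this smallest minimiser by $y^*(x):=x-f^*(x,\theta)$.

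Under this translation the hypotheses read cleanly. From $f^*(x_0,\theta)=a_0$ one gets that $y^*(x_0)=x_0-a_0$ is the smallest minimiser of $\phi$ on $\{0,\ldots,x_0\}$, while $f^*(x_0+1,\theta)>0$ gives $y^*(x_0+1)=(x_0+1)-f^*(x_0+1,\theta)\le x_0$. Since $a_0+1=(x_0+1)-(x_0-a_0)$, the assertion $f^*(x_0+1,\theta)=a_0+1$ is equivalent to $y^*(x_0+1)=y^*(x_0)$. Thus it suffices to show that enlarging the constraint set from $\{0,\ldots,x_0\}$ to $\{0,\ldots,x_0+1\}$ does not move the smallest minimiser of $\phi$, given that this minimiser stays in $\{0,\ldots,x_0\}$.

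The last step is a short nested-minimisation argument. The candidate set for $x_0+1$ is the one for $x_0$ together with the single extra point $y=x_0+1$, so $\min_{0\le y\le x_0+1}\phi(y)=\min\{\min_{0\le y\le x_0}\phi(y),\,\phi(x_0+1)\}$. If $\phi(x_0+1)<\min_{0\le y\le x_0}\phi(y)$, then $x_0+1$ is the unique minimiser over the enlarged set, forcing $y^*(x_0+1)=x_0+1$ and contradicting $y^*(x_0+1)\le x_0$; this case is exactly the one excluded by the hypothesis $f^*(x_0+1,\theta)>0$. Hence $\phi(x_0+1)\ge\min_{0\le y\le x_0}\phi(y)$, the minimal value is unchanged, and the minimisers that lie in $\{0,\ldots,x_0\}$ are precisely the minimisers of $\phi$ over $\{0,\ldots,x_0\}$. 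Being the smallest of these, $y^*(x_0+1)$ coincides with $y^*(x_0)$, which is the desired conclusion. I expect no genuine obstacle here: the only delicate point is the bookkeeping with the largest-minimiser (equivalently smallest-$y$) convention and the explicit exclusion of the strict-drop case, and one could alternatively phrase the same comparison directly with the $G$-terms as in the proof of Lemma~\ref{lem:w_decreasing} if one prefers to avoid introducing $\phi$.
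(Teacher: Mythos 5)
Your proof is correct and is essentially the paper's argument in different coordinates: the paper splits $J(x_0+1,\theta)=\min\bigl\{G(x_0+1,\theta\beta),\,\min_{a\in A(x_0)}e^{\theta(a+1)}G(x_0-a,\theta\beta)\bigr\}$ and transfers the defining inequalities of the largest minimiser from $x_0$ to $x_0+1$, which is exactly your nested-minimisation step for $\phi(y)=e^{-\theta y}G(y,\theta\beta)$ after the substitution $y=x-a$. The post-dividend reparametrisation is a tidy way to make the $x$-independence of the objective explicit, but the decomposition, the use of the hypothesis $f^*(x_0+1,\theta)>0$ to rule out the newly added point strictly dominating, and the largest-minimiser (equivalently smallest-$y$) bookkeeping coincide with the paper's proof.
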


\begin{proof}
By definition of $f^*$ we have that
\begin{eqnarray*}
J(x_0,\theta) &=& e^{\theta a_0} G(x_0-a_0,\theta\beta) \left\{ \begin{array}{l}
\le  e^{\theta a} G(x_0-a,\theta\beta), \;\mbox{for}\; a=0,\ldots, a_0\\
                      <  e^{\theta a} G(x_0-a,\theta\beta), \;\mbox{for}\; a=a_0+1,\ldots, x_0,
                     \end{array}\right.
\end{eqnarray*}
which further yields that
\begin{eqnarray*}
  e^{\theta (a_0+1)} G(x_0-a_0,\theta\beta) \left\{ \begin{array}{l}
  \le  e^{\theta (a+1)} G(x_0-a,\theta\beta), \;\mbox{for}\; a=0,\ldots, a_0\\
                      <  e^{\theta (a+1)} G(x_0-a,\theta\beta), \;\mbox{for}\; a=a_0+1,\ldots, x_0.
                     \end{array}\right.
\end{eqnarray*}
Again from the definition of $f^*$ we obtain that
\begin{eqnarray*}
J(x_0+1,\theta) &=& \min_{a\in A(x_0+1)} e^{\theta a} G(x_0+1-a,\theta\beta) \\
  &= & \min\Big\{ G(x_0+1,\theta \beta), \min_{a\in A(x_0)} e^{\theta (a+1)} G(x_0-a,\theta\beta) \Big\}.
\end{eqnarray*}
Since $f^*(x_0+1,\theta)>0,$ it holds that
$$ G(x_0+1,\theta\beta) \ge \min_{a\in A(x_0)} e^{\theta (a+1)} G(x_0-a,\theta\beta).$$
This fact, in turn, together with the previous observation yields by shifting the index that
\begin{eqnarray*}
  e^{\theta (a_0+1)} G(x_0-a_0,\theta\beta) \left\{ \begin{array}{l}
  \le  e^{\theta a} G(x_0+1-a,\theta\beta), \;\mbox{for}\; a=0,\ldots, a_0+1\\
                      <  e^{\theta a} G(x_0+1-a,\theta\beta), \;\mbox{for}\; a=a_0+2,\ldots, x_0+1.
                     \end{array}\right.
\end{eqnarray*}
Thus, it follows that $f^*(x_0+1,\theta)=a_0+1$.
\end{proof}

\subsection{Optimal policy and its structure }

Recall now that $\gamma$ is a constant risk averse coefficient of the insurer.
Consider the following policy $\pi^*:= (\widetilde{g}_{0},\widetilde{g}_{1},\ldots),$
where $\widetilde{g}_{n}(\cdot):=f^*(\cdot,\gamma\beta^{n}).$  We note that $\pi^*\in\Pi^M.$
Clearly,   since $\gamma\in I,$ then $\gamma\beta^n\in I$ for all $n\in\N_0.$

\begin{corollary}\label{ruin_0} Under policy $\pi^*$ the ruin occurs with probability 1, i.e.,
$\Pop^{\pi^*}_x(\tau <+\infty)=1$ for every $x\in X.$
\end{corollary}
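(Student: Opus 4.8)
The plan is to exploit the uniform finite bound $\xi^*<\infty$ from Theorem~\ref{theo:xi_finite}: under $\pi^*$ the surplus left after each dividend payment never exceeds $\xi^*$, and since assumption (A2) guarantees a negative income of positive probability, a fixed number of such incomes will drive the bounded post-dividend surplus below zero, forcing ruin with a probability bounded away from $0$.

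First I would dispose of the case $x<0$, where $\tau=0$ trivially. For $x\ge 0$, the structural input is that at stage $n$ the policy $\pi^*$ pays $a_n=f^*(x_n,\gamma\beta^n)$, and by Lemma~\ref{lem:no_dividend_after_payout} one has $f^*(x_n-a_n,\gamma\beta^n)=0$, so the post-dividend surplus satisfies $y_n:=x_n-a_n\le \xi(\gamma\beta^n)\le \xi^*$ by Theorem~\ref{theo:xi_finite} (recall $\gamma\beta^n\in I$). Thus, whenever ruin has not yet occurred, $y_n\in\{0,1,\ldots,\xi^*\}$, \emph{uniformly} in $n$ and in the past history. This uniform bound, which is unavailable without $\xi^*<\infty$, is the crux of the argument.

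Next, by (A2) I would fix an integer $m\ge 1$ with $q_{-m}=\Pop(Z=-m)>0$ and set $L:=\lfloor \xi^*/m\rfloor+1$, so that $Lm>\xi^*$. I claim that $L$ consecutive income realisations equal to $-m$ force ruin from any admissible post-dividend state. Indeed, dividends only decrease the surplus ($a_n\ge 0$), so as long as ruin has not occurred the post-dividend surplus drops by at least $m$ at each such step; after at most $L$ of them it would have to be $\le \xi^*-Lm<0$, which is impossible unless ruin occurred first. Writing $p:=(q_{-m})^L>0$, it follows that on the event of having survived up to time $jL$ the conditional probability of being ruined by time $(j+1)L$ is at least $p$, with $p$ independent of $j$ and of the current state.

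Finally I would convert this into an almost-sure statement using independence. Since $\pi^*\in\Pi^M$ is a deterministic Markov policy, each $x_n$ is a deterministic function of $x_0,Z_1,\ldots,Z_n$, so the event $\{\tau>jL\}=\{x_0\ge 0,\ldots,x_{jL}\ge 0\}$ is $\sigma(Z_1,\ldots,Z_{jL})$-measurable, whereas the block event $E_j:=\{Z_{jL+1}=\cdots=Z_{(j+1)L}=-m\}$ lies in $\sigma(Z_{jL+1},\ldots,Z_{(j+1)L})$ and is therefore independent of it. The forcing property gives the inclusion $\{\tau>(j+1)L\}\subseteq\{\tau>jL\}\cap E_j^c$, whence $\Pop_x^{\pi^*}(\tau>(j+1)L)\le(1-p)\,\Pop_x^{\pi^*}(\tau>jL)$, and by induction $\Pop_x^{\pi^*}(\tau>kL)\le(1-p)^k\to 0$. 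Letting $k\to\infty$ yields $\Pop_x^{\pi^*}(\tau=\infty)=0$, i.e. $\Pop_x^{\pi^*}(\tau<\infty)=1$. I expect the main obstacle to be the careful justification of the \emph{uniform} lower bound $p$ in the third step: one must combine the state-independent bound $y_n\le\xi^*$ with $q_{-m}>0$ so that the geometric decay constant $1-p$ does not depend on the (time-varying) current state, and this is precisely where the finiteness $\xi^*<\infty$ from Theorem~\ref{theo:xi_finite} is indispensable.
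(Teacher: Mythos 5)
Your proposal is correct and follows essentially the same route as the paper: both arguments rest on the uniform post-dividend bound $\xi^*<\infty$ from Theorem \ref{theo:xi_finite} (together with Lemma \ref{lem:no_dividend_after_payout}) and on independent blocks of sufficiently negative incomes whose occurrence forces ruin from any surviving state. The only difference is cosmetic: the paper uses blocks of length $\xi^*+1$ of arbitrary negative incomes and concludes via the second Borel--Cantelli lemma, whereas you use blocks of incomes equal to $-m$ of length $L=\lfloor\xi^*/m\rfloor+1$ and finish with the geometric-decay induction $\Pop_x^{\pi^*}(\tau>kL)\le(1-p)^k\to 0$, an equally valid (and in fact slightly more explicitly justified) way to close the argument.
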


\begin{proof}
Assume that the surplus process  equals $x_0\in X.$
If $x_0\ge 0,$ then either $x_0\le \xi(\gamma)$ or $x_0>\xi(\gamma).$
However, from Theorem \ref{theo:xi_finite} we know that in both cases the risk reserve (surplus)
 just after dividend payment is
always less or equal to $\xi(\gamma)\le\xi^*.$ Therefore, the ruin will occur, if
there appears a sequence of length $\xi^*+1$ of negative incomes. But the probability
 that such a sequence appears, equals
\begin{equation}
\label{compr}
\Pop(Z_{1}<0,\ldots,Z_{\xi^*+1}<0)=\left(\sum_{m=-\infty}^{-1}q_m\right)^{\xi^*+1},
\end{equation}
which is positive by (A2). If the ruin has not occurred up to the $l$th day, where  $l:=\xi^*+1,$
 then again $x_l\le \xi(\gamma\beta^l)$ or $x_l>\xi(\gamma\beta^l).$
But  from Theorem \ref{theo:xi_finite}
in both cases the risk reserve just after dividend payment is
always less or equal to $\xi(\gamma\beta^l)\le\xi^*.$   The probability that there exists
a sequence of length $\xi^*+1$ of negative incomes is $\left(\sum_{m=-\infty}^{-1}q_m\right)^{\xi^*+1}.$
Thus, considering the states $x_{k(\xi^*+1)},$ $k\in\N_0,$ we may
define the following events
$$A_k=\{Z_{k(\xi^*+1)+1}<0,Z_{k(\xi^*+1)+2}<0\ldots,Z_{(k+1)(\xi^*+1)}<0\},
\quad k\in\N_0.$$
By the second Borel-Cantelli lemma $\Pop(A_k \;i.o.)=1.$ Therefore, the ruin must occur.
\end{proof}

\begin{theorem}\label{theo:optimal}
The Markov policy $\pi^*$ is optimal, i.e.,
$$\widetilde{J}(x)=\widetilde{J}_{\pi^*}(x)=\frac 1\gamma J_{\pi^*}(x,\gamma)=\frac1\gamma J(x,\gamma)$$
for $x\in X.$
\end{theorem}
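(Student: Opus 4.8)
The plan is to reduce everything to the single nontrivial equality $J_{\pi^*}(x,\gamma)=J(x,\gamma)$, after which the chain of identities in the statement is automatic. Indeed, by definition $\widetilde J_{\pi^*}(x)=\Eop_x^{\pi^*}U_\gamma(\sum_k\beta^k r(x_k,a_k))=\frac1\gamma J_{\pi^*}(x,\gamma)$, which gives the second equality. Moreover, since $\gamma<0$ the multiplier $\frac1\gamma$ is negative, so maximising $\widetilde J_\pi$ over $\pi$ is the same as minimising $J_\pi(\cdot,\gamma)$; hence $\widetilde J(x)=\sup_{\pi}\frac1\gamma J_\pi(x,\gamma)=\frac1\gamma\inf_{\pi}J_\pi(x,\gamma)=\frac1\gamma J(x,\gamma)$, which is the third identity. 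Consequently, once $J_{\pi^*}(x,\gamma)=J(x,\gamma)$ is established, all four quantities coincide and $\pi^*$ is optimal. For $x<0$ this is immediate from the boundary values $\widetilde J(x)=\frac1\gamma$, $J(x,\gamma)=1$ and the fact that under $\pi^*$ the surplus is frozen with zero dividends, so I would treat only the substantive case $x\ge0$.

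To prove $J_{\pi^*}(x,\gamma)=J(x,\gamma)$, recall that $f^*(\cdot,\theta)$ is a minimiser of the right-hand side of the optimality equation of Theorem \ref{theo:fixed_point}, so that for every $\theta\in I$ and $x\ge0$
$$J(x,\theta)=e^{\theta f^*(x,\theta)}\sum_{x'\in X}J(x',\theta\beta)\,q\big(x'\mid x,f^*(x,\theta)\big).$$
Since $\pi^*=(\widetilde g_n)_{n\in\N_0}$ with $\widetilde g_n=f^*(\cdot,\gamma\beta^{n})$ applies exactly this minimiser at stage $n$ — where the book-keeping component of the extended state equals $\gamma\beta^{n}$ — I would substitute this relation into itself repeatedly. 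An induction on $n$, peeling off one stage at a time and using the Markov property under $\P_x^{\pi^*}$ together with $r(x_k,a_k)=a_k$, yields the finite-horizon identity
$$J(x,\gamma)=\Eop_x^{\pi^*}\left(\exp\Big\{\gamma\sum_{k=0}^{n-1}\beta^k r(x_k,a_k)\Big\}\,J(x_n,\gamma\beta^{n})\right),\qquad n\in\N.$$

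It then remains to pass to the limit $n\to\infty$. Because $\gamma<0$, $r\ge0$ and $0<J\le1$, the integrand is bounded above by $1$, which furnishes a dominating function. For the pointwise limit I would invoke Corollary \ref{ruin_0}: under $\pi^*$ ruin occurs almost surely, so for $\P_x^{\pi^*}$-almost every trajectory there is a (random) stage beyond which $x_n<0$. At such stages $J(x_n,\gamma\beta^{n})=1$ by the boundary condition, while $a_k=0$ for $k\ge\tau$, so the truncated integrand already equals $\exp\{\gamma\sum_{k=0}^{\infty}\beta^k r(x_k,a_k)\}$. Dominated convergence then gives $J(x,\gamma)=\Eop_x^{\pi^*}\exp\{\gamma\sum_{k=0}^{\infty}\beta^k r(x_k,a_k)\}=J_{\pi^*}(x,\gamma)$, which completes the argument.

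The delicate point is precisely this passage to the limit, namely the control of the terminal factor $J(x_n,\gamma\beta^{n})$, which does not vanish and need not converge to a constant along a general trajectory. The resolution rests entirely on Corollary \ref{ruin_0}: once the surplus turns negative it is frozen there and accrues no further dividends, so the boundary value $J\equiv1$ on $\{x<0\}$ forces the truncated integrand to stabilise \emph{exactly} to the infinite-horizon payoff on almost every path. Without almost-sure ruin under $\pi^*$ this stabilisation would fail, so Corollary \ref{ruin_0} is the genuine crux of the optimality proof rather than an incidental remark.
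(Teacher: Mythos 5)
Your proposal is correct and follows essentially the same route as the paper: reduce to $J_{\pi^*}(x,\gamma)=J(x,\gamma)$ via the sign of $1/\gamma$, iterate the optimality equation of Theorem \ref{theo:fixed_point} along $\pi^*$, and pass to the limit using almost-sure ruin (Corollary \ref{ruin_0}) together with dominated convergence. Your unsplit terminal factor $J(x_n,\gamma\beta^n)$ is equivalent to the paper's decomposition into $J(x_n,\gamma\beta^n)1[\tau\ge n]+1[\tau<n]$, since dividends vanish and $J\equiv 1$ after ruin, so the two limit arguments coincide in substance.
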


\begin{proof} From Theorem \ref{theo:fixed_point} and the definition of $\pi^*$ we obtain
for every $x\in X$ that
\begin{eqnarray*}
J(x,\gamma)&=&\min_{a\in A(x)}\left[e^{\gamma r(x,a)}\sum_{x'\in X} J(x',\gamma\beta)q(x'|x,a)\right]\\
&=&e^{\gamma \widetilde{g}_0(x)}\sum_{x'\in X} J(x',\gamma\beta)q(x'|x,\widetilde{g}_0(x)).
\end{eqnarray*}
Assume that $x\ge 0.$
Iterating the last equality $n$ times under the Markov policy $\pi^*,$ we obtain that
\begin{equation}
\label{opt_pol}
 J(x,\gamma) =
\Eop_x^{\pi^*}\left(\exp\left\{\gamma\sum_{k=0}^{(\tau-1)\wedge (n-1)} \beta^k a_k\right\}
 \left(J(x_n,\gamma\beta^n) 1[\tau\ge n] + 1[\tau < n] \right)\right).
 \end{equation}
Observe now that
$$0\le \Eop_x^{\pi^*}\left(\exp\left\{\gamma\sum_{k=0}^{(\tau-1)\wedge (n-1)} \beta^k a_k\right\}
J(x_n,\gamma\beta^n) 1[\tau\ge n]\right) \le \Eop_x^{\pi^*}1[\tau\ge n]= \Pop_x^{\pi^*}(\tau\ge n).$$
But by  Corollary \ref{ruin_0},  $\Pop_x^{\pi^*}(\tau\ge n)\to 0$ as $n\to\infty.$ Hence,
letting $n\to\infty$ in (\ref{opt_pol}) and making use of the dominated convergence theorem we obtain that
$$\inf_{\pi\in\Pi}J_{\pi}(x,\gamma)=J(x,\gamma) =
\Eop_x^{\pi^*}\left(\exp\left\{\gamma\sum_{k=0}^{\tau-1} \beta^k a_k\right\}
\right)=J_{\pi^*}(x,\gamma)$$
for $x\in X.$ The conclusion follows by multiplying the above display by the number $1/\gamma.$
\end{proof}

\begin{definition}
A function $g\in \Gamma$ is called a {\em band-function}, if there exists numbers $n\in\N_0$
and $c_0,\ldots,c_n,d_1,\ldots,d_n\in\N_0$ such that $d_k-c_{k-1}\ge 2$ for $k=1,\ldots,n,$
$0\le c_0\le d_1\le c_1\le d_2\le\ldots\le d_n\le c_n$ and
$$g(x)=    \left\{ \begin{array}{l}
  0, \mbox{ if } x\le c_0\\
x-c_k, \mbox{ if } c_k<x<  d_{k+1}\\
  0, \mbox{ if } d_k\le x\le c_k\\
x-c_n, \mbox{ if } x> c_{n}.
\end{array}\right.
$$
A Markov policy $\pi=(g_m)_{m\in N_0}$ is called a band-policy, if  $g_m$ is a band-function for every
$n\in \N_0.$
\end{definition}

\begin{theorem}\label{theo:band1}
The optimal Markov policy $\pi^*$ is a band-policy.
\end{theorem}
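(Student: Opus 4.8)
The plan is to reduce the statement to a single fixed time slice: since $\widetilde{g}_{n}(\cdot)=f^*(\cdot,\gamma\beta^{n})$ and $\gamma\beta^{n}\in I$ for every $n\in\N_0$, it suffices to prove that for each fixed $\theta\in I$ the largest minimiser $f^*(\cdot,\theta)$ is a band-function. So first I would fix $\theta\in I$, abbreviate $f(x):=f^*(x,\theta)$, and assemble the structural facts already in hand. The no-payout set $N:=\{x\in\N_0:\;f(x)=0\}$ is nonempty, since $A(0)=\{0\}$ forces $f(0)=0$, and it is bounded above by $\xi(\theta)\le\xi^*<\infty$ by Theorem \ref{theo:xi_finite}; moreover $\xi(\theta)=\max N$ and $f(x)=x-\xi(\theta)$ for every $x>\xi(\theta)$. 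Being a finite nonempty set of non-negative integers, $N$ decomposes into its maximal runs of consecutive integers, $N=\bigcup_{k=0}^{n}[d_k,c_k]$, with $d_0=0$, $c_n=\xi(\theta)$, and a genuine gap between successive blocks, whence $d_k-c_{k-1}\ge 2$ for $k=1,\ldots,n$ and $0\le c_0\le d_1\le c_1\le\cdots\le d_n\le c_n$. These are exactly the parameters demanded by the definition of a band-function.

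It then remains to check that $f$ takes the prescribed affine form on the payout regions between, and beyond, the no-payout blocks, and here the key step is an induction driven by Lemma \ref{lem:band1}. On a gap $\{c_k+1,\ldots,d_{k+1}-1\}$ the base case is $f(c_k)=0$; because $c_k+1$ lies in the payout set we have $f(c_k+1)>0$, so Lemma \ref{lem:band1} forces $f(c_k+1)=1=(c_k+1)-c_k$. Repeating this, and using at each step that the next point still has positive payout until the run terminates at $d_{k+1}$, yields $f(c_k+j)=j$, that is $f(x)=x-c_k$ throughout the gap, so the post-payout level stays pinned at the top $c_k$ of the preceding block. For the unbounded final region $x>c_n=\xi(\theta)$, Theorem \ref{theo:xi_finite} already delivers $f(x)=x-c_n$ outright. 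Matching $f\equiv 0$ on the blocks, $f(x)=x-c_k$ on the gaps, and $f(x)=x-c_n$ beyond against the four cases of the definition identifies $f(\cdot,\theta)$ as a band-function.

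I expect the only real difficulty to be bookkeeping rather than substance: one must start the induction precisely at the top $c_k$ of each no-payout block and halt it exactly when the payout first returns to $0$ at the left end $d_{k+1}$ of the next block, and one must invoke the finiteness from Theorem \ref{theo:xi_finite} to guarantee that only finitely many bands arise. Lemma \ref{lem:no_dividend_after_payout}, which asserts that no dividend is paid immediately after a payout, serves as a consistency check that every post-payout state does lie in $N$, reconciling the block decomposition with the inductive computation.
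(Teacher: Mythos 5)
Your proof is correct and takes essentially the same route as the paper's: fix $\theta=\gamma\beta^{n}$, use Theorem \ref{theo:xi_finite} to get $f^*(x,\theta)=x-\xi(\theta)$ above $\xi(\theta)$, and run the induction driven by Lemma \ref{lem:band1} across each maximal stretch of positive-payout states. The only difference is presentational: you decompose the no-payout set into maximal blocks up front, while the paper builds the same blocks by scanning upward from $0$.
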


\begin{proof}
Recall that $\widetilde{g}_n(\cdot)=f^*(\cdot,\gamma\beta^n)$ for $n\in\N_0.$
By Theorem \ref{theo:xi_finite} we have $\widetilde{g}_n(x)= x-\xi(\gamma\beta^n)$ for all
$x> \xi(\gamma\beta^n)$. For $x\le\xi(\gamma\beta^n)$ we have to distinguish different cases. If
$\widetilde{g}_n(x)=0$ for all $x=0,\ldots ,\xi(\gamma\beta^n)$, then clearly $\widetilde{g}_n$ is a
band-function. If there exists an $0<x_0\le \xi(\gamma\beta^n)$ such that $\widetilde{g}_n(x)=0$
for $x=0,\ldots ,x_0-1$ and $\widetilde{g}_n(x_0)>0$, then by Lemma
\ref{lem:band1}
$\widetilde{g}_n(x_0)=1$. If further $\widetilde{g}_n(x_0+m)>0$ for
$m=1,\ldots ,\xi(\gamma)-x_0-1$ then by induction
$$ \widetilde{g}_n(x_0+m)=\widetilde{g}_n(x_0+m-1)+1=\ldots = \widetilde{g}_n(x_0)+m =m+1.$$
If $\widetilde{g}_n(x_0+1)=0$ we either have $\widetilde{g}_n(x)=0$ for $x=x_0+1,\ldots,\xi(\gamma\beta^n)$
or there exists an
$x_1$ such that  $x_0<x_1\le \xi(\gamma\beta^n)$ and
$\widetilde{g}_n(x_0+m)=0$ for $m=1,\ldots,x_1-x_0-1$ and $\widetilde{g}_n(x_1)>0$.
Now we proceed in the same way as with $x_0$. After a finite number
of steps we reach $\xi(\gamma\beta^n)$. In any case $\widetilde{g}_n$ is a
band-function.
\end{proof}

\begin{remark}
In the risk neutral dividend payout problem, the optimal policy is a stationary band-policy, i.e.,
it consists of the same band-function at all time points. The risk neutral problem can formally
be obtained as a limit $\lim_{\gamma \to 0} \frac1\gamma (e^{\gamma x}-1)$. Hence, the exponential utility function
only implies some kind of non-stationarity of the optimal policy and
 thus does not really make  it necessary to consider history-dependent policies.
\end{remark}


\section{The Power Utility Function}

In this section we assume that the insurer is equipped with the power utility function
$U_\gamma(x)= x^\gamma,$ where $\gamma $ is a fixed number from the interval $(0,1).$
The decision maker wishes to maximise the following expected discounted payoff
$$  \widehat{J}_\pi(x):= \Eop_x^\pi U_\gamma\left(\sum_{k=0}^{\infty}\beta^k r(x_k,a_k)\right)$$
and to find a policy $\pi^*\in\Pi$ (if exists) such that
\begin{equation}
\label{oppp}
\widehat{J}(x):=\sup_{\pi\in\Pi} \widehat{J}_\pi(x)=\widehat{J}_{\pi^*}(x)
\end{equation}
for all $x\in X.$  Clearly,  $\widehat{J}(x)=0$ for $x<0.$ In Lemma \ref{lem:pbounds}
we show that under our assumptions
$\widehat{J}(x)<+\infty$ for each $x\ge 0.$ Moreover, note that for $x\ge 0$
$$  \widehat{J}(x):= \sup_{\pi\in\Pi} \Eop_x^\pi U_\gamma\left(\sum_{k=0}^{\tau-1}\beta^k a_k\right).$$

 \subsection{Optimality equation and the properties of its largest maximiser} Contrary to the exponential utility function,
 the power utility function reveals certain non-separability, that is implied by the fact that
 the expectation operator is only linear.  Therefore, we again extend the state space
 by defining the new state space $\widehat{X}:=X\times [0,\infty)$ (cf. \cite{br_mor}). In this case, the second component
 is responsible for the accumulated payoffs so far.
 If the process is in the state $(x,y)$ and the insurer selects an action $a\in A(x),$
 then the probability of moving to a next state $(x',y')$ is $q(x'|x,a),$
 if $y' = \frac{y+a}\beta$ and is $0,$
 if $y'\not=\frac{y+a}\beta.$ Hence, we can observe that the second component is again established in a
 deterministic way, but it differs from the previous case, since $y'$ depends on the action chosen by the
 insurer.

Let us define the feasible extended histories of the process  up to the $k$th day as follows
\footnote{We use the same symbol $h_k$ as in the previous section to denote an extended feasible history
of the process up to the $k$th day. But there is no confusion, since $h_k$
in this subsection refers only to the power
utility case. The same remark applies to the policy $\sigma,$  the set $\Xi$ and the functions $J$ and $J_\pi$ defined below.}
$$h_0=(x_0,y_0)\quad\mbox{and}\quad h_k=(x_0,y_0, a_0,x_1,y_1,\ldots,a_{k-1},x_k,y_k),\;\;k\ge 1,$$
where $(x_m,a_m)\in D$ for each $m\in\N_0$ and with $y_{m+1}$ given by the recurrence equation
$$y_{m+1}:=\frac{y_{m}+a_{m}}\beta,\quad m\in \N_0.$$
 Then, we can define, as usual, an {\it extended history-dependent  policy}
 $\sigma = (\sigma_{k})_{k\in \N_0},$ where
 $\sigma_{k}$ is a mapping from the set of feasible extended histories
 up to the $k$th day to the action set $A$ such that
$\sigma_k(h_k) \in A(x_k)$ with  $h_k$ defined above.
Let $\Xi$ be the set of all such policies.
Note that for any $\sigma\in\Xi$ ($y\ge 0$ is fixed), there exists  a  policy $\pi\in\Pi$  that is
equivalent to $\sigma$ in the following sense:
$$\pi_k(\cdot|\omega_k):=\sigma_k(\cdot|h_k),\quad\omega_k\in\Omega_k,\quad k\in \N_0.$$
Obviously, $\pi$ must depend on $y.$
Thus again, for simplicity of notation we shall still use the original set of policies $\Pi,$
and the expectation operator $\Eop_x^\pi,$
where $x$ is the first component of the initial state.
The dependence on $y\ge 0$ of a policy will be indicated by writing
the second variable to the value function.

In what follows, we put for $n\in \N,$ $\pi\in\Pi,$ $x\ge 0$ and $y\ge 0$
$$
J_{n,\pi}(x,y) :=   \Eop_x^\pi \left(\sum_{k=0}^{(n-1)\wedge(\tau-1)}\beta^k r(x_k,a_k)+y\right)^\gamma=
\Eop_x^\pi \left(\sum_{k=0}^{(n-1)\wedge(\tau-1)}\beta^k a_k+y\right)^\gamma$$
and
$$ J_{n}(x,y) := \sup_{\pi\in\Pi} J_{n,\pi}(x,y).$$
Moreover, for $\pi\in\Pi,$ $x\ge 0$ and $y\ge 0$ we set
$$J(x,y)=\sup_{\pi\in\Pi}J_\pi(x,y), \quad\mbox{where}\quad J_\pi(x,y)=
\Eop_x^\pi \left(\sum_{k=0}^{\tau-1}\beta^k a_k+y\right)^\gamma.$$
If $x<0,$ then $J(x,y)=y^\gamma$ for $y\ge 0.$
Obviously, $ J(x,0) = \widehat{J}(x).$

Before we formulate our first result, we introduce a specific subset of policies $\widehat{\Pi}\subset\Pi.$
Let $F$ be the set of functions $f:\widehat{X}\mapsto A$ such that
$f(x,y)\in A(x)$ for all $y\ge 0$ and let
$(f_k)_{k\in\N_0}$ be a sequence of functions with $f_k\in F.$
Then,  $\widehat{\Pi}$  is the set of all policies $\pi=(\pi_k)_{k\in\N_0}$ defined in the following way
$$ \pi_k(\omega_k):=f_k(x_k,y_k),\quad \omega_k\in\Omega_k,\quad k\in\N_0,
$$
where $y_k:=\frac{y_{k-1}+a_{k-1}}\beta,$ $k\in\N,$  and $y_0:=y\ge 0$ is a fixed number.
Furthermore, we shall identify a policy $\pi\in\widehat{\Pi}$ with the sequence $(f_k)_{k\in\N_0}$ by writing
$\pi=(f_k)_{k\in\N_0}.$

Next for any  function $h:\widehat{X}\mapsto \R$ we define an  operator $T$  as follows
\begin{equation}
\label{T_op}
Th(x,y):=\beta^\gamma \max_{a\in A(x)} \left[ \sum_{x'\in X} h\left(x',\frac{a+y}\beta\right) q(x'|x,a)  \right].
\end{equation}
Let $f\in F$ be the maximiser of the right-hand side in (\ref{T_op}), i.e., $f(x)$ attains the maximum on the right-hand side
of (\ref{T_op}) for all $x\in X$. We also set
$$T_{f}h(x,y):=\beta^\gamma  \sum_{x'\in X} h\left(x',\frac{f(x,y)+y}\beta\right) q(x'|x,f(x,y)).$$
Note that $T_{f}h=Th.$

\begin{theorem}\label{theo:Jprecursive}
For each $n\in\N_0$ the value function $J_n$ satisfies the equation
\begin{equation}
\label{recur}
J_{n+1} = TJ_{n}
\end{equation}
with $J_n(x,y)=y^\gamma$ for $x<0$  and  $J_0(x,y)= y^\gamma$.
Let $\bar{f}_{l}\in F$ be such that $J_{l+1} =T_{\bar{f}_{l}}J_{l}$ for $l=0,\ldots,n.$
Then, $\bar{\pi}=(\bar{f}_{n},\ldots,\bar{f}_{0})$
is optimal for $J_{n+1},$ i.e., $J_{n+1}=J_{n+1,\bar{\pi}}.$
\end{theorem}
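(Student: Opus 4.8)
The plan is to prove both assertions simultaneously by induction on $n$, since the optimality of the backward-induction policy is exactly what powers the lower bound in the recursion. Concretely, I would establish the statement
$$(H_n):\quad J_n = TJ_{n-1}, \quad\text{and}\quad (\bar f_{n-1},\ldots,\bar f_0)\in\widehat\Pi \ \text{ attains } \ J_n,$$
for every $n\ge1$, where $\bar f_l$ is any maximiser realising $J_{l+1}=T_{\bar f_l}J_l$. The boundary values are immediate: for $x<0$ the only admissible action is $0$ and $q(x|x,0)=1$, so both $J_n(x,y)=y^\gamma$ and $TJ_n(x,y)=\beta^\gamma(y/\beta)^\gamma=y^\gamma$ hold by inspection, while the empty-sum convention gives $J_0(x,y)=y^\gamma$ directly. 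The base case $n=1$ reduces to $J_1(x,y)=\max_{a\in A(x)}(a+y)^\gamma$, since the first reward is deterministic once $a_0$ is fixed; this is precisely $TJ_0(x,y)$, attained by the one-step maximiser $\bar f_0$.

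The heart of the argument is a single dynamic-programming step. Fix $x\ge0$, $y\ge0$ and $\pi\in\Pi$, and set $a_0=\pi_0(x)$. Splitting off the $k=0$ term (always present, since $\tau\ge1$ when $x\ge0$) and factoring $\beta$ out of the remaining discounted sum, I would condition on $x_1=x-a_0+Z_1$ and invoke the Ionescu--Tulcea/Markov structure: given $(a_0,x_1)$ the continuation is governed by the shifted policy $\vec\pi$ of (\ref{shift_p}), whose ruin time is $\tau-1$. The algebraic identity
$$\Big(a_0+y+\beta\sum_{k\ge1}\beta^{k-1}a_k\Big)^\gamma=\beta^\gamma\Big(\tfrac{a_0+y}\beta+\sum_{k\ge1}\beta^{k-1}a_k\Big)^\gamma$$
shows that the rescaled coordinate $y_1=\frac{a_0+y}\beta$ is exactly the accumulation variable of $\widehat X$, and that the ruin case $x_1<0$ merges with the surviving case because $J_{n,\vec\pi}(x_1,\cdot)=(\cdot)^\gamma$ there. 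This yields the clean recursion
$$J_{n+1,\pi}(x,y)=\beta^\gamma\sum_{x'\in X}J_{n,\vec\pi}\big(x',\tfrac{a_0+y}\beta\big)\,q(x'|x,a_0).$$

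With this identity in hand the two inequalities follow. For the upper bound I would use $J_{n,\vec\pi}\le J_n$ together with the definition of $T$ to obtain $J_{n+1,\pi}(x,y)\le TJ_n(x,y)$, and then take the supremum over $\pi$. For the lower bound, and for optimality, I would feed in the candidate policy $\bar\pi=(\bar f_n,\bar f_{n-1},\ldots,\bar f_0)\in\widehat\Pi$: its time-$0$ action is the maximiser $\bar f_n(x,y)$ in (\ref{T_op}), while its shift is precisely $(\bar f_{n-1},\ldots,\bar f_0)$ read with baseline $y_1=\frac{\bar f_n(x,y)+y}\beta$, which by the induction hypothesis $(H_n)$ attains $J_n(\cdot,y_1)$. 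Substituting into the recursion gives $J_{n+1,\bar\pi}(x,y)=TJ_n(x,y)$, so $J_{n+1}\ge TJ_n$ and $\bar\pi$ is optimal; combined with the upper bound this establishes $(H_{n+1})$.

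The step I expect to be most delicate is the bookkeeping of the shifted policy: one must verify that for a policy in $\widehat\Pi$ the shift $\vec\pi$ is again a $\widehat\Pi$-policy whose accumulation variable restarts correctly at $y_1=\frac{a_0+y}\beta$, so that $y_{k+1}=\frac{y_k+a_k}\beta$ remains respected after re-indexing. The conditional-expectation splitting and the interchange of the $\gamma$-power with conditioning and the infinite $x'$-summation I would justify using the finiteness $J_n\le J<\infty$ supplied by Lemma \ref{lem:pbounds}. The remaining manipulations — the index shift $k\mapsto k-1$ in the truncated sum and the identity $(n-1)\wedge(\tau-2)=\big(n\wedge(\tau-1)\big)-1$ on the surviving event $\{x_1\ge0\}$ — are routine.
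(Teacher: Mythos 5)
Your proposal is correct and follows essentially the same route as the paper's proof: a simultaneous induction establishing the recursion and the optimality of the backward policy, the same base case $J_1(x,y)=(x+y)^\gamma=TJ_0(x,y)$, the same one-step conditioning on $(a_0,x_1)$ with the shifted policy $\vec{\pi}$ (giving $J_{n+1}\le TJ_n$ via $J_{n,\vec{\pi}}\le J_n$), and the reverse inequality by evaluating the policy $\bar{\pi}=(\bar f_n,\ldots,\bar f_0)$ whose shift is optimal for $J_n$ by the induction hypothesis. The only difference is organizational: you route both bounds through the explicit policy-evaluation identity $J_{n+1,\pi}(x,y)=\beta^\gamma\sum_{x'\in X}J_{n,\vec{\pi}}\bigl(x',\frac{a_0+y}{\beta}\bigr)q(x'|x,a_0)$, whereas the paper carries out the two inequalities inline in displays (\ref{nr1}) and (\ref{nr2}).
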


\begin{proof}
Let $n=1.$ Then, by the definition of $J_1$ we have that
$$J_1(x,y)=\sup_{\pi\in\Pi}\Eop_x^\pi(\beta^0 a_0+y)^\gamma=\max_{a\in A(x)}(a+y)^\gamma=(x+y)^\gamma.$$
On the other hand,
 $$TJ_0(x,y)=\beta^\gamma \max_{a\in A(x)}\left(\frac{a+y}\beta\right)^\gamma
=(x+y)^\gamma=T_{\bar{f}_{0}}J_{0}(x,y)$$ with $\bar{f}_0(x,y)=x.$

 Assume now that $J_l=TJ_{l-1}=T_{\bar{f}_{l-1}}J_{l-1}$ with $\bar{f}_{l-1}\in F$  for all $l=1,\ldots,n$
 and let $(\bar{f}_{n-1},\ldots, \bar{f}_{0})$
 be an optimal policy for $J_n.$
We show that $J_{n+1}=TJ_{n}=T_{\bar{f}_{n}}J_{n}$ and $(\bar{f}_{n},\ldots, \bar{f}_{0})$ is optimal for
$J_n.$

We have that
$$J_{n+1}(x,y)=\sup_{\pi\in\Pi} \Eop_x^\pi \left(\sum_{k=1}^{n\wedge(\tau-1)}\beta^k
a_k+a_0+y\right)^\gamma$$
with the convention that the empty sum equals $0$ (when $\tau=1$).
 Recall that $\vec{\pi}$ denotes a ``1-shifted'' policy, see (\ref{shift_p}).
We further get the following
\begin{eqnarray}
\label{nr1}
J_{n+1}(x,y)
&=&\beta^\gamma\sup_{\pi\in\Pi} \Eop_x^\pi \left(\sum_{k=1}^{n\wedge(\tau-1)}\beta^{k-1} a_k+
\frac{a_0+y}\beta\right)^\gamma\\\nonumber
&=&\beta^\gamma\sup_{\pi\in\Pi} \Eop_x^\pi \left[
\Eop_x^\pi \left(\left(\sum_{k=1}^{n\wedge(\tau-1)}\beta^{k-1} a_k+
\frac{a_0+y}\beta\right)^\gamma\Big|a_0,x_1\right)\right]\\\nonumber
&\le&\beta^\gamma\sup_{a_0\in A(x)}\left[ \sum_{m=-\infty}^\infty
\sup_{\vec{\pi}\in\Pi}\Eop_{x-a_0+m}^{\vec{\pi}} \left(\sum_{k=1}^{n\wedge(\tau-1)}\beta^{k-1} a_k+
\frac{a_0+y}\beta\right)^\gamma q_m\right]\\\nonumber
&=& \beta^\gamma\max_{a\in A(x)}\left[\sum_{m=-\infty}^\infty
J_{n}\left(x-a+m,\frac{a+y}\beta\right) q_m\right]=TJ_n(x,y).
\end{eqnarray}
Let $\bar{f}_n$ be such  that $TJ_n(x,y)=T_{\bar{f}_n}J_n(x,y).$
Put $\bar{\pi}=(\bar{f}_n,\bar{f}_{n-1},\ldots,\bar{f}_0).$ By induction
assumption
$\vec{\bar{\pi}}=(\bar{f}_{n-1},\ldots,\bar{f}_0)$ is optimal for $J_n.$
Hence, it follows that
\begin{eqnarray}
\label{nr2}
TJ_n(x,y)&=&\beta^\gamma \max_{a\in A(x)} \left[ \sum_{m=-\infty}^\infty J_{n\vec{\bar{\pi}}}\left(x-a+m,
\frac{a+y}\beta\right) q_m  \right]\\ \nonumber
&=&\max_{a\in A(x)} \beta^\gamma\sum_{m=-\infty}^\infty
\Eop_{x-a+m}^{\vec{\bar{\pi}}}\left(\sum_{k=0}^{(n-1)\wedge(\tau-1)}\beta^ka_{k+1}+
\frac{y+a}\beta\right)^\gamma q_m\\\nonumber
&=& \max_{a\in A(x)}\sum_{m=-\infty}^\infty
\Eop_{x-a+m}^{\vec{\bar{\pi}}}\left(\sum_{k=0}^{(n-1)\wedge(\tau-1)}\beta^{k+1}a_{k+1}+y+a\right)^\gamma
q_m\\\nonumber
&\le&
\sup_{\pi\in\Pi}\Eop_{x}^{\pi}\left(\sum_{k=0}^{n\wedge(\tau-1)}\beta^{k}a_{k}+y\right)^\gamma=J_{n+1}(x,y).
\end{eqnarray}
Thus, (\ref{nr1}) and (\ref{nr2}) yield that $J_{n+1}=TJ_n.$
The fact that $\bar{\pi}$ defined above is optimal for $J_{n+1}$
follows from repeating the calculations  in (\ref{nr2}) applied to $T_{\bar{f}_n}J_n.$
\end{proof}

The next result  can be concluded from Theorem \ref{theo:Jprecursive}.

\begin{theorem}\label{theo:Jfixedpoint}
The function $J$ satisfies the following equation
\begin{eqnarray}
\label{optp}
 J(x,y) &=& \beta^\gamma \max_{a\in A(x)}
\left[ \sum_{x'\in X} J\left(x', \frac{a+y}\beta\right) q(x'|x,a) \right]\\\nonumber
&=&\beta^\gamma \max_{a\in A(x)}
\left[ \sum_{k=-\infty}^\infty J\left(x-a+k, \frac{a+y}\beta\right) q_k \right]
\end{eqnarray}
 for $x\in\N_0$ and $ y \ge 0.$
\end{theorem}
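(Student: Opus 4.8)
The plan is to derive the infinite-horizon optimality equation by passing to the limit in the finite-horizon recursion $J_{n+1}=TJ_n$ supplied by Theorem \ref{theo:Jprecursive}. The first step is to establish that for each fixed $(x,y)\in\widehat{X}$ the sequence $(J_n(x,y))_{n\in\N_0}$ is non-decreasing, so that its pointwise limit $\bar{J}(x,y):=\lim_{n\to\infty}J_n(x,y)$ exists in $[0,\infty]$. This monotonicity is immediate from the definitions: for any fixed $\pi\in\Pi$ the truncated sum $\sum_{k=0}^{(n-1)\wedge(\tau-1)}\beta^k a_k$ increases with $n$ because all summands are non-negative, and since $t\mapsto t^\gamma$ is increasing on $[0,\infty)$ we get $J_{n,\pi}(x,y)\le J_{n+1,\pi}(x,y)$; taking the supremum over $\pi$ yields $J_n\le J_{n+1}$.

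The second step is to identify $\bar{J}$ with $J$. For the bound $\bar{J}\le J$, observe that $J_{n,\pi}(x,y)\le J_\pi(x,y)$ for every $\pi$, since the truncated sum is dominated by the full sum $\sum_{k=0}^{\tau-1}\beta^k a_k$; hence $J_n=\sup_\pi J_{n,\pi}\le\sup_\pi J_\pi=J$, and letting $n\to\infty$ gives $\bar{J}\le J$. For the reverse inequality, fix $\pi\in\Pi$: by the monotone convergence theorem applied under $\P_x^\pi$, the integrands increase pointwise and $J_{n,\pi}(x,y)\uparrow J_\pi(x,y)$, so $\bar{J}\ge\lim_{n}J_{n,\pi}=J_\pi$; taking the supremum over $\pi$ gives $\bar{J}\ge J$. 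Combining, $\bar{J}=J$, and Lemma \ref{lem:pbounds} guarantees that this common limit is finite for each $(x,y)$.

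The final step is to pass to the limit on both sides of $J_{n+1}(x,y)=\beta^\gamma\max_{a\in A(x)}\big[\sum_{k=-\infty}^\infty J_n(x-a+k,(a+y)/\beta)\,q_k\big]$. The left-hand side converges to $J(x,y)$ by the previous step. On the right, since $J_n\uparrow J$ and $q_k\ge 0$, each series $\sum_{k}J_n(x-a+k,(a+y)/\beta)\,q_k$ increases to $\sum_{k}J(x-a+k,(a+y)/\beta)\,q_k$ by monotone convergence for series (Beppo Levi with respect to counting measure weighted by $(q_k)$). Because $A(x)=\{0,\dots,x\}$ is finite and each of these finitely many series is monotone in $n$, the maximum over $a$ commutes with the limit, so the right-hand side converges to $\beta^\gamma\max_{a\in A(x)}\big[\sum_{k}J(x-a+k,(a+y)/\beta)\,q_k\big]$. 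This yields the asserted equation, and rewriting the inner sum as $\sum_{x'\in X}J(x',(a+y)/\beta)\,q(x'|x,a)$ gives the first displayed form.

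I expect the only genuine obstacle to be the interchange of $\lim_n$ with the infinite sum $\sum_{k=-\infty}^\infty$; this is resolved by the monotonicity $J_n\uparrow J$ together with $q_k\ge 0$, which makes monotone convergence applicable and sidesteps any need for a uniform integrable majorant. The interchange with $\max_{a\in A(x)}$ is harmless precisely because the action set is finite.
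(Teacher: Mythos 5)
Your proposal is correct and follows essentially the same route as the paper's own proof: monotone convergence of the finite-horizon values $J_n$ to $J$ (using $J_{n,\pi}\le J_\pi$ for one direction and the monotone convergence theorem for the other), followed by passing to the limit in the recursion $J_{n+1}=TJ_n$ from Theorem \ref{theo:Jprecursive}. The only difference is that you spell out the limit interchange with the infinite series and the finite maximum, which the paper compresses into ``letting $n\to\infty$ and replacing the maximum with the limit.''
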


\begin{proof}
It is obvious that the sequence of functions $(J_n(x,y))_{n\in\N_0}$ is increasing for each $(x,y)\in\widehat{X}.$
Hence, $w(x,y):=\lim_{n\to\infty} J_n(x,y)$
exists for every $(x,y)\in \widehat{X}.$ Since $J_n\le J,$ then it follows that $w\le J.$ On the other hand,
for any policy $\pi\in\Pi$ we obtain that
$J_{n,\pi}\le J_n.$ Letting $n\to\infty,$ making use of the monotone convergence theorem
and  taking the supremum over $\pi\in\Pi,$ it is easily seen that
$$J(x,y)=\sup_{\pi\in\Pi} \Eop_x^\pi\left(\sum_{k=0}^{\tau-1}\beta^ka_k+y\right)^\gamma
\le w(x,y).$$ Consequently, $w=J.$ Equation (\ref{optp}) follows from (\ref{recur}) by letting
$n\to\infty$ and replacing the maximum with the limit.
\end{proof}

\begin{remark}
The counterpart of Theorem  \ref{theo:Jfixedpoint} is Theorem 4.1(a) in \cite{br_mor}. However, again as in the exponential case
this theorem was proved for general state space, weakly continuous transitions and {\it bounded} costs.
\end{remark}

The next lemma provides the following bounds for the function $J.$

\begin{lemma}\label{lem:pbounds}
For any $x\in\N_0$ and $y\ge 0$ it follows that
$$ (x+y)^\gamma \le J(x,y) \le \left(x+\beta \frac{\Eop Z^+}{1-\beta}+y\right)^\gamma.$$
\end{lemma}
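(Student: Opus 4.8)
The plan is to prove the two bounds separately, obtaining the lower bound essentially for free and concentrating the work on the upper bound. For the lower bound I would simply note that $J\ge J_1$, since the sequence $(J_n)_{n\in\N_0}$ is nondecreasing and converges to $J$ (as established in the proof of Theorem \ref{theo:Jfixedpoint}). The computation at the start of the proof of Theorem \ref{theo:Jprecursive} already gives $J_1(x,y)=(x+y)^\gamma$, so $J(x,y)\ge J_1(x,y)=(x+y)^\gamma$ at once. Equivalently, the policy paying the whole surplus $a_0=x$ at time $0$ yields $\sum_{k=0}^{\tau-1}\beta^k a_k+y\ge x+y$ pathwise, whence the same conclusion after raising to the power $\gamma$.

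For the upper bound I would run an induction on $n$ against the recursion $J_{n+1}=TJ_n$ from Theorem \ref{theo:Jprecursive}, proving
$$J_n(x,y)\le (x+y+c_n)^\gamma,\qquad x\ge 0,\ y\ge 0,$$
where $c_0:=0$ and $c_{n+1}:=\beta(\Eop Z^++c_n)$. The base case holds because $J_0(x,y)=y^\gamma\le (x+y)^\gamma$. For the inductive step I would first record the uniform pointwise estimate $J_n(x',y')\le \big((x')^++y'+c_n\big)^\gamma$, valid for \emph{every} $x'\in X$: for $x'\ge 0$ it is the inductive hypothesis, while for $x'<0$ it follows from $J_n(x',y')=(y')^\gamma$ together with $(x')^+=0$ and $c_n\ge 0$. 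Inserting this into $TJ_n$, writing $q(x'|x,a)=q_{x'-x+a}$, bounding $(x-a+k)^+\le (x-a)+k^+$ (legitimate since $x-a\ge 0$), and applying Jensen's inequality to the concave map $t\mapsto t^\gamma$ gives
$$J_{n+1}(x,y)\le \beta^\gamma\max_{a\in A(x)}\Big(x-a+\tfrac{a+y}{\beta}+c_n+\Eop Z^+\Big)^\gamma .$$
Here finiteness of $\Eop Z^+$ is exactly assumption (A1), which is what makes the Jensen step meaningful.

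It then remains to optimise over $a$. Pulling the factor $\beta$ inside the bracket, it becomes $\beta x+(1-\beta)a+y+\beta c_n+\beta\Eop Z^+$, which is increasing in $a$ because $1-\beta>0$; hence the maximum is attained at $a=x$ and equals $x+y+\beta c_n+\beta\Eop Z^+=x+y+c_{n+1}$, closing the induction. Solving the linear recursion yields $c_n=\beta\,\Eop Z^+\,\tfrac{1-\beta^n}{1-\beta}\nearrow \tfrac{\beta\Eop Z^+}{1-\beta}$, and letting $n\to\infty$ using $J_n\nearrow J$ and continuity of $t\mapsto t^\gamma$ gives $J(x,y)\le\big(x+\beta\tfrac{\Eop Z^+}{1-\beta}+y\big)^\gamma$, as claimed.

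The only genuinely delicate point is the inductive step. One must treat the ruin states $x'<0$ and the solvent states $x'\ge 0$ simultaneously, which is precisely what the $(x')^+$ majorisation accomplishes, and then one must track the discount factors carefully through the $\beta^\gamma$ in front of $T$ and the $\tfrac{a+y}{\beta}$ inside it, so that the resulting geometric series produces exactly the constant $\tfrac{\beta\Eop Z^+}{1-\beta}$ rather than the cruder $\tfrac{\Eop Z^+}{1-\beta}$ that a naive telescoping argument would give.
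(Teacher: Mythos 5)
Your proof is correct, and it takes a genuinely different route from the paper's. The paper disposes of the lemma in two sentences: for the lower bound it uses exactly your second variant (the policy with $\pi_0=x$ and $\pi_k=0$ for $k\ge 1$, giving $J(x,y)\ge J_\pi(x,y)=(x+y)^\gamma$; your first variant via $J\ge J_1=(x+y)^\gamma$ is equivalent), and for the upper bound it applies Jensen's inequality once to the infinite-horizon payoff, $J_\pi(x,y)\le \bigl(\Eop_x^\pi\bigl[\sum_{k=0}^{\tau-1}\beta^k a_k\bigr]+y\bigr)^\gamma$, and then simply cites Theorem 9.2.3(a) of \cite{br}, which bounds the risk-neutral value of discounted dividends by $x+\beta\,\Eop Z^+/(1-\beta)$. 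You instead replace that citation by a self-contained induction along the value iteration $J_{n+1}=TJ_n$ of Theorem \ref{theo:Jprecursive}, applying Jensen stepwise, tracking the constants $c_{n+1}=\beta(\Eop Z^+ + c_n)$, and passing to the limit via $J_n\nearrow J$ (established in the proof of Theorem \ref{theo:Jfixedpoint}, so there is no circularity). The trade-off is clear: the paper's argument is essentially one line but outsources the key quantitative estimate to an external monograph; yours is longer but stays entirely within the paper's own machinery and in effect re-derives the risk-neutral bound inside the induction --- the inner maximisation being attained at $a=x$ because $1-\beta>0$ is precisely the risk-neutral ``pay everything'' mechanism that the cited book result encodes. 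Both arguments rest on the same two ingredients, assumption (A1) and concavity of $t\mapsto t^\gamma$, and your bookkeeping (the $(x')^+$ majorisation to handle ruin states, and the placement of $\beta^\gamma$ so the geometric series yields $\beta\,\Eop Z^+/(1-\beta)$ rather than $\Eop Z^+/(1-\beta)$) is accurate.
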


\begin{proof} Let $\pi$ be a policy such that $\pi_0(\cdot|\omega_0)=x$ and
$\pi_k(\cdot|\omega_k)=0$ for $k\ge 1.$ Then,  $J(x,y)\ge J_{\pi}(x,y)=(x+y)^\gamma.$
The upper bound for the function $J$ is due to the Jensen inequality and Theorem 9.2.3(a)
in \cite{br} that gives  the upper bound for the risk neutral setting.
\end{proof}

For simplicity for  any $x\in\N_0$  and $y\ge 0$ we define
$$ G(x,y) :=  \sum_{k=-\infty}^\infty J(x+k, y) q_k.$$
From (\ref{optp}) we obtain that
\begin{equation}
\label{optpg}
 J(x,y) = \beta^\gamma \max_{a\in A(x)}
G\left(x-a, \frac{a+y}\beta\right), \quad x\in \N_0,\;\; y\ge 0.
\end{equation}

\begin{lemma}\label{theo:propJ}
For all $0\le v \le x$ we have that $J(x,y) \ge J\left(x-v,y+v\right)$.
\end{lemma}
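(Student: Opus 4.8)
The plan is to establish the inequality first for each finite-horizon value function $J_n$ and then to pass to the limit $n\to\infty$, using the fact (established inside the proof of Theorem \ref{theo:Jfixedpoint}) that $J_n(x,y)\uparrow J(x,y)$ for every $(x,y)\in\widehat{X}$. The intuition behind the claim is simple: starting from surplus $x$ with accumulated payoff $y$, the insurer may pay out the extra amount $v$ immediately at time $0$; since this dividend is undiscounted ($\beta^0=1$), it has exactly the same effect on the objective as replacing $y$ by $y+v$, while the post-payment surplus becomes $x-v$, i.e. the surplus of the comparison problem. Everything after time $0$ can then be mimicked, which explains why the two sides should agree along this particular strategy and hence why the asserted inequality holds after taking the supremum over policies.

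To make this rigorous I would use the recursion $J_{n+1}=TJ_n$ from Theorem \ref{theo:Jprecursive} to show directly, for every $n\ge 0$ and all $0\le v\le x$, $y\ge 0$, that $J_{n+1}(x,y)\ge J_{n+1}(x-v,y+v)$. By the definition \eqref{T_op} of $T$,
\begin{equation*}
J_{n+1}(x-v,y+v)=\beta^\gamma\max_{a\in A(x-v)}\sum_{k=-\infty}^\infty J_n\Big(x-v-a+k,\tfrac{a+v+y}\beta\Big)q_k .
\end{equation*}
The key step is the substitution $a\mapsto a+v$: for each $a\in A(x-v)=\{0,\ldots,x-v\}$ the action $a+v$ lies in $A(x)=\{0,\ldots,x\}$, and the corresponding summand in the $TJ_n$ representation of $J_{n+1}(x,y)$,
\begin{equation*}
J_n\Big(x-(a+v)+k,\tfrac{(a+v)+y}\beta\Big)=J_n\Big(x-v-a+k,\tfrac{a+v+y}\beta\Big),
\end{equation*}
coincides term by term (including the weight $q_k$, and irrespective of the sign of the first coordinate) with the summand appearing for the action $a$ in $J_{n+1}(x-v,y+v)$. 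Since $v+A(x-v)\subseteq A(x)$, maximising over the larger action set $A(x)$ can only increase the value, so that
\begin{equation*}
J_{n+1}(x,y)=\beta^\gamma\max_{a\in A(x)}\sum_{k=-\infty}^\infty J_n\Big(x-a+k,\tfrac{a+y}\beta\Big)q_k\ \ge\ J_{n+1}(x-v,y+v).
\end{equation*}

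Letting $n\to\infty$ and invoking $J_n\uparrow J$ then yields $J(x,y)\ge J(x-v,y+v)$, as required. I do not anticipate a genuine obstacle: the argument needs no nontrivial use of an induction hypothesis on $J_n$, because the displayed change of variables already matches the two maximisation problems summand by summand. The only points demanding care are the verification of the action-set inclusion $v+A(x-v)\subseteq A(x)$ and the justification of interchanging the inequality with the monotone limit, both of which are immediate from the definitions and from the monotone convergence already exploited in the proof of Theorem \ref{theo:Jfixedpoint}.
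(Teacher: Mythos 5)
Your proof is correct, and its core step --- the substitution $a \mapsto a+v$ together with the action-set inclusion $v + A(x-v) \subseteq A(x)$, which matches the two Bellman maximisations summand by summand --- is exactly the argument the paper uses. The only difference is one of scaffolding: the paper applies this substitution directly to the optimality equation \eqref{optpg} for $J$ itself (available from Theorem \ref{theo:Jfixedpoint}), splitting the maximum over $A(x)$ into the actions $a<v$ and $a\ge v$, which makes your detour through the finite-horizon functions $J_n$ and the monotone limit unnecessary, though your version is equally valid (and, as you note, it correctly needs no induction hypothesis, which matters since the inequality actually fails for $J_0$).
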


\begin{proof}
 Suppose  that  $0< v \le x.$ For part (b) observe that
\begin{eqnarray*}
 J(x,y) &=& \max\left\{\left\{\beta^\gamma G\left(x,\frac{y}\beta\right), \ldots,
  \beta^\gamma G\left(x-v+1,\frac{y+v-1}\beta\right)\right\},\right.\\
  &&\left.
  \max\left\{\beta^{\gamma} G\left(x-v,\frac{y+v}\beta\right),\ldots,\beta^{\gamma}
  G\left(0,\frac{y+x}\beta\right) \right\}  \right\}\\
   &=& \max\left\{\left\{\beta^\gamma G\left(x,\frac{y}\beta\right), \ldots,
  \beta^\gamma G\left(x-v+1,\frac{y+v-1}\beta\right)\right\},\right.\\
  &&\left. \beta^\gamma\max_{a\in A(x-v)}G\left(x-v-a,\frac{y+v+a}\beta\right)  \right\}\ge
  J\left(x-v,y+v\right),
  \end{eqnarray*}
where the last inequality is due to (\ref{optpg}).
\end{proof}

In what follows let $f^*\in F$ be the largest maximiser of the right-hand side in (\ref{optpg}).
For completeness, set $f^*(x,y)=0$ for $x<0.$

\begin{lemma}\label{lem:paydown}
For $x\in\N_0$ and $y\ge 0$ it follows that $f^*\big( x-f^*(x,y),y+f^*(x,y)\big)=0$.
\end{lemma}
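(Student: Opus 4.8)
The plan is to fix $x\in\N_0$ and $y\ge 0$, abbreviate $a^*:=f^*(x,y)$, and reduce the claim to a reindexing observation about the maximiser of a single function. Introduce
$$\Phi(a):=\beta^\gamma\,G\!\left(x-a,\frac{a+y}\beta\right),\qquad a\in A(x)=\{0,\dots,x\}.$$
By (\ref{optpg}) we have $J(x,y)=\max_{a\in A(x)}\Phi(a)=\Phi(a^*)$, and since $f^*$ is by definition the \emph{largest} maximiser, $a^*$ is the largest element of $A(x)$ at which $\Phi$ attains its maximum; in particular $\Phi(a)<\Phi(a^*)$ strictly for every $a\in\{a^*+1,\dots,x\}$, because such an $a$ is not a maximiser.

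Next I would write down the optimality equation at the shifted state $(x-a^*,\,y+a^*)$. By (\ref{optpg}),
$$J(x-a^*,\,y+a^*)=\beta^\gamma\max_{b\in\{0,\dots,x-a^*\}}G\!\left(x-a^*-b,\frac{b+y+a^*}\beta\right)=:\max_{b}\Psi(b).$$
The key step is the substitution $a=b+a^*$: since $x-a^*-b=x-a$ and $\frac{b+y+a^*}\beta=\frac{a+y}\beta$, the two arguments of $G$ coincide, so $\Psi(b)=\Phi(b+a^*)$ for every $b\in\{0,\dots,x-a^*\}$. As $b$ ranges over $\{0,\dots,x-a^*\}$ the index $a=b+a^*$ ranges over $\{a^*,\dots,x\}$, which is exactly the part of the domain of $\Phi$ lying at or above $a^*$.

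Combining the two observations finishes the argument: $\Psi(0)=\Phi(a^*)$ is the maximum of $\Phi$ over all of $A(x)$, hence a fortiori the maximum of $\Psi$, while for every $b\ge 1$ one has $a=b+a^*>a^*$ and therefore $\Psi(b)=\Phi(b+a^*)<\Phi(a^*)=\Psi(0)$. Thus $b=0$ is the \emph{unique} maximiser of $\Psi$, hence in particular its largest one, which means precisely $f^*(x-a^*,\,y+a^*)=0$, as claimed. The only point that needs care — and which I regard as the main (if modest) obstacle — is to verify that the reindexed range $\{a^*,\dots,x\}$ never dips below $a^*$, so that the strict inequality $\Phi(a)<\Phi(a^*)$ is invoked only for $a>a^*$ (where the largest-maximiser property guarantees it) and no information is needed about $\Phi$ below $a^*$, where maximality could fail to be strict. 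This mirrors Lemma \ref{lem:no_dividend_after_payout} in the exponential case; alternatively, the inequality $J(x-a^*,\,y+a^*)\ge J(x,y)$ showing $b=0$ is \emph{a} maximiser can be extracted directly from Lemma \ref{theo:propJ} with $v=a^*$, after which the same strictness argument yields uniqueness.
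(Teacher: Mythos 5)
Your proof is correct and follows essentially the same route as the paper's: the optimality equation at the shifted state $(x-a^*,\,y+a^*)$, the re-indexing $a=b+a^*$ making the two $G$-expressions coincide, and the largest-maximiser property of $f^*$ to get the strict inequality for $b\ge 1$, hence uniqueness of $b=0$. The only cosmetic difference is that you establish that $b=0$ is \emph{a} maximiser directly from the re-indexing, whereas the paper sandwiches $J(x,y)=\beta^\gamma G\bigl(x-a^*,\frac{y+a^*}{\beta}\bigr)\le J(x-a^*,y+a^*)\le J(x,y)$ by citing Lemma \ref{theo:propJ} --- whose own proof is exactly your re-indexing argument, a shortcut you yourself point out at the end.
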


\begin{proof}
By (\ref{optpg}) we obtain $J(x,y) \ge \beta^\gamma G\left(x,\frac{y}\beta\right),$ which implies that
$$
  J(x,y) = \beta^\gamma G\left(x-f^*(x,y), \frac{y+f^*(x,y)}{\beta}\right)
  \le J\left( x-f^*(x,y),y+f^*(x,y)\right)\le J(x,y),
$$
where the second inequality follows from Lemma \ref{theo:propJ}. Hence,
$$\beta^\gamma G\left(x-f^*(x,y), \frac{y+f^*(x,y)}{\beta}\right)
= J\left( x-f^*(x,y),y+f^*(x,y)\right),$$
which implies that $a=0$ maximises the expression
$\beta^{\gamma}G(x-f^*(x,y)-a,\frac{y+f^*(x,y)+a}\beta).$
We claim that $a=0$ is the only maximiser of this expression. Obviously, if $f^*(x,y)=x,$
then the result follows. If, on the other hand,
$f^*(x,y)<x,$ then
$$J(x,y)=\beta^{\gamma}G(x-f^*(x,y),\frac{y+f^*(x,y)}\beta)>
\beta^{\gamma}G(x-f^*(x,y)-a,\frac{y+f^*(x,y)+a}\beta)$$
for $a=1,\ldots,x-f^*(x,y).$ This fact, in turn, implies that $a=0$ is the only maximiser,
which concludes the proof.
\end{proof}

\begin{lemma}\label{lem:pzero}
Let $\xi(y) := \sup\{ x\in\N_0 : f^*(x,y) =0\}$. Then $\xi^*:=\sup_{y\ge 0} \xi(y) < \infty.$
\end{lemma}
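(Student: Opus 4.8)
The plan is to follow the template of Theorem~\ref{theo:xi_finite}. Fix $y\ge 0$, suppose $f^*(x,y)=0$, and squeeze $x$ between bounds that are \emph{uniform in} $y$. Since $f^*$ is the largest maximiser in \eqref{optpg}, the equality $f^*(x,y)=0$ forces $a=0$ to be the unique maximiser, so in particular it must strictly beat the extreme action $a=x$ (pay everything). Spelling this out, $G(x,y/\beta)>G(0,(x+y)/\beta)$. I would then bound the two sides separately using only Lemma~\ref{lem:pbounds} and the concavity of $t\mapsto t^\gamma$.

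For the left side, set $A:=x+y/\beta+C$ with $C:=\beta\Eop Z^+/(1-\beta)$. Using the upper bound of Lemma~\ref{lem:pbounds} for the terms with $x+k\ge 0$, the identity $J(x+k,y/\beta)=(y/\beta)^\gamma\le A^\gamma$ when $x+k<0$, the fact that all $k<0$ terms are $\le A^\gamma$, and the tangent-line inequality $(A+k)^\gamma\le A^\gamma+\gamma A^{\gamma-1}k$ for $k\ge 0$, the sum collapses to
$$G\!\left(x,\tfrac{y}{\beta}\right)\le A^\gamma+\gamma A^{\gamma-1}\,\Eop Z^+,$$
where the correction is finite by (A1). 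For the right side, set $B:=(x+y)/\beta$; the lower bound of Lemma~\ref{lem:pbounds} gives $J(k,B)\ge (k+B)^\gamma\ge B^\gamma$ for $k\ge 0$ and $J(k,B)=B^\gamma$ for $k<0$, hence $G(0,B)\ge B^\gamma$. Combining, $f^*(x,y)=0$ would imply $A^\gamma+\gamma A^{\gamma-1}\Eop Z^+>B^\gamma$. Writing $B=A+\delta$ with $\delta:=x\tfrac{1-\beta}{\beta}-C$ (which is $\ge 0$ in the relevant range), concavity applied at $B$ yields $B^\gamma\ge A^\gamma+\gamma B^{\gamma-1}\delta$, while the elementary estimate $B/A\le 1/\beta$ (a consequence of $y\le y/\beta$) gives $B^{\gamma-1}\ge \beta^{1-\gamma}A^{\gamma-1}$. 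Substituting, the displayed inequality cannot hold once $\delta\ge \beta^{\gamma-1}\Eop Z^+$, that is, once $x\ge x^*:=\frac{\beta}{1-\beta}\big(C+\beta^{\gamma-1}\Eop Z^+\big)$. Thus $f^*(x,y)=0$ is impossible for $x\ge x^*$, so $\xi(y)<x^*$ for every $y\ge 0$, giving $\xi^*\le x^*<\infty$.

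The point that needs care — and where a naive imitation of the exponential case breaks down — is precisely the uniformity in $y$. The accumulated wealth enters both sides, and as $y\to\infty$ the utility becomes locally linear, so the problem degenerates to the risk-neutral one, whose barrier is finite; any bound meant to capture this must be accurate to first order in the displacement. Concretely, the crude subadditive estimate $(A+k)^\gamma\le A^\gamma+k^\gamma$ would leave an $O(1)$ remainder $\Eop (Z^+)^\gamma$ that does \emph{not} vanish as $y\to\infty$ and would render the comparison inconclusive for large $y$. The tangent-line bound is essential because its remainder is $O(A^{\gamma-1})\to 0$, which is exactly what lets the undiscounted one-step gain $x\tfrac{1-\beta}{\beta}$ from paying now dominate the concave, hence sublinear, continuation value beyond a finite surplus level, no matter how large $y$ is.
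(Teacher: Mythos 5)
Your proof is correct, but its execution is genuinely different from the paper's. Both arguments rest on the same comparison---if $a=0$ is optimal at $(x,y)$, it must beat paying out everything---and both invoke Lemma~\ref{lem:pbounds} together with concavity; the difference is how concavity is exploited. The paper keeps the prefactor $\beta^\gamma$ from \eqref{optpg}, absorbs it into the arguments via $\beta^\gamma t^\gamma=(\beta t)^\gamma$, and then applies Jensen's inequality to collapse the whole sum into a single power, giving $J(x,y)\le\big(\beta x+\frac{\beta}{1-\beta}\Eop Z^++y\big)^\gamma$; comparing with the lower bound $(x+y)^\gamma$ of Lemma~\ref{lem:pbounds} and using that $t\mapsto t^\gamma$ is increasing, the exponent strips off and one is left with the linear inequality $x+y\le \beta x+\frac{\beta}{1-\beta}\Eop Z^++y$, in which $y$ cancels identically. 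Thus the uniformity in $y$ that you rightly identify as the delicate point comes for free, and the resulting threshold $\frac{\beta}{(1-\beta)^2}\Eop Z^+$ is clean and $\gamma$-free. Your route instead cancels $\beta^\gamma$, compares the two continuation values $G(x,y/\beta)>G\big(0,(x+y)/\beta\big)$, and controls both sides by tangent-line estimates, so uniformity comes from the cancellation of the $A^{\gamma-1}$ factors on the two sides; this is an honest first-order analysis, it yields a $\gamma$-dependent (and slightly larger, though still finite) threshold $x^*$, and its merit is that it exhibits the mechanism explicitly: a linear one-step gain from paying out versus a sublinear continuation gain. One cosmetic point: the strict comparison with the action $a=x$ presupposes $x\ge 1$ (for $x=0$ the two actions coincide), so your conclusion should read that $f^*(x,y)=0$ is impossible for $x\ge\max\{x^*,1\}$; this does not affect the finiteness of $\xi^*$.
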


\begin{proof} Fix $y\ge 0.$ Let $x\in \N_0$ be such that
$f^*(x,y)=0.$ Clearly, such $x\in\N_0$ exists.  From (\ref{optg}) we have that
\begin{eqnarray*}
  J(x,y) &=& \beta^\gamma\left( \sum_{k=-x}^\infty J\big(x+k, \frac{y}\beta\big) q_k +
   \Big(\frac{y}\beta\Big)^\gamma \sum_{k=-\infty}^{-x-1} q_k \right)\\
   &\le& \beta^\gamma\left(  \sum_{k=-x}^\infty \big(x+k +\frac{\beta}{1-\beta} \Eop Z^+ +
   \frac{y}\beta\big)^\gamma q_k + \Big(\frac{y}\beta\Big)^\gamma \sum_{k=-\infty}^{-x-1} q_k\right)  \quad
   \mbox{(by Lemma \ref{lem:pbounds})}\\
   &=&  \sum_{k=-x}^\infty \big(\beta x+\beta k +\frac{\beta^2}{1-\beta} \Eop Z^+ + y\big)^\gamma q_k +
   {y}^\gamma \sum_{k=-\infty}^{-x-1} q_k \\
   &\le&  \left( \sum_{k=-x}^\infty \big(\beta x+\beta k +\frac{\beta^2}{1-\beta} \Eop Z^+ + y\big) q_k +
   {y} \sum_{k=-\infty}^{-x-1} q_k\right)^\gamma \quad\mbox{(by the Jensen inequality)}\\
   &\le& \left( \beta x+\beta \Eop Z^+ +\frac{\beta^2}{1-\beta} \Eop Z^+ + y\right)^\gamma
   =  \left( \beta x+\frac{\beta}{1-\beta} \Eop Z^+ +{y}\right)^\gamma.
\end{eqnarray*}
On the other hand, making use again of Lemma \ref{lem:pbounds} we have that
$J(x,y) \ge (x+y)^\gamma$ and, consequently,
$$ (x+y)^\gamma \le \left( \beta x+\frac{\beta}{1-\beta} \Eop Z^+ +y\right)^\gamma$$ if and only if $x\le
\frac{\beta}{(1-\beta)^2} \Eop Z^+,$ which is independent of $y$ and implies the result.
\end{proof}

The next result is a counterpart of Lemma \ref{lem:band1} and provides further properties of the function $f^*\in F.$

\begin{lemma}\label{lem:band2}
Let $x_0\in\N_0$ and $y_0\ge 1$. If $f^*(x_0,y_0)=a_0$ and $f^*(x_0+1,y_0-1)>0$, then
$f^*(x_0+1,y_0-1)=a_0+1.$
\end{lemma}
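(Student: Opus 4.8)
The plan is to follow the pattern of the proof of Lemma \ref{lem:band1}, but to exploit a shift identity that is in fact cleaner in the power case, because here the second coordinate records accumulated dividends rather than a discount factor. First I would introduce, for fixed capital $x$ and accumulated wealth $y$, the objective $\phi_{x,y}(a):=G\!\left(x-a,\frac{a+y}{\beta}\right)$, so that by (\ref{optpg}) we have $J(x,y)=\beta^\gamma\max_{a\in A(x)}\phi_{x,y}(a)$ and, by definition, $f^*(x,y)$ is the largest $a\in\{0,\ldots,x\}$ attaining this maximum. The hypothesis $y_0\ge 1$ guarantees that $(x_0+1,y_0-1)\in\widehat{X}$, so the second objective $\phi_{x_0+1,y_0-1}$ is well defined.

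The key step is the elementary identity
$$\phi_{x_0+1,\,y_0-1}(a+1)=G\!\left((x_0+1)-(a+1),\ \frac{(a+1)+(y_0-1)}{\beta}\right)=G\!\left(x_0-a,\ \frac{a+y_0}{\beta}\right)=\phi_{x_0,y_0}(a),$$
valid for every $a\in\{0,\ldots,x_0\}$. Thus the restriction of $\phi_{x_0+1,y_0-1}$ to the actions $\{1,\ldots,x_0+1\}$ coincides, after the reindexing $a\mapsto a+1$, with $\phi_{x_0,y_0}$ on $\{0,\ldots,x_0\}$. Since $a_0=f^*(x_0,y_0)$ is the largest maximiser of $\phi_{x_0,y_0}$, the largest maximiser of $\phi_{x_0+1,y_0-1}$ over $\{1,\ldots,x_0+1\}$ is $a_0+1$; this lies in $A(x_0+1)$ because $a_0\le x_0$.

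It then remains to compare with the single extra action that the enlarged state admits, namely $a'=0$, whose value is $\phi_{x_0+1,y_0-1}(0)=G\!\left(x_0+1,\frac{y_0-1}{\beta}\right)$. If this value strictly exceeded the maximum over the positive actions, then $a'=0$ would be the unique maximiser and we would obtain $f^*(x_0+1,y_0-1)=0$, contradicting the standing hypothesis $f^*(x_0+1,y_0-1)>0$. Hence $\phi_{x_0+1,y_0-1}(0)$ does not exceed the maximum over $\{1,\ldots,x_0+1\}$, so the global maximum is attained there and its largest maximiser is the positive value $a_0+1$. Therefore $f^*(x_0+1,y_0-1)=a_0+1$, as claimed.

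I would expect the only delicate point to be the bookkeeping around the word \emph{largest}: one must verify that ruling out $a'=0$ as the strict optimum (via the positivity hypothesis) forces the largest \emph{global} maximiser to coincide with the largest maximiser over the positive actions, and that the reindexing $a\mapsto a+1$ preserves the selection of the largest maximiser. No unboundedness difficulties intervene here, since for fixed capital the action set $A(x)$ is finite; this reindexing argument is therefore the main, and essentially the only, obstacle.
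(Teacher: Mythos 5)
Your proof is correct and follows essentially the same route as the paper: your identity $\phi_{x_0+1,y_0-1}(a+1)=\phi_{x_0,y_0}(a)$ is exactly the index shift the paper performs on its system of inequalities, and your use of the hypothesis $f^*(x_0+1,y_0-1)>0$ to eliminate the one extra action $a=0$ matches the paper's decomposition of the maximum over $A(x_0+1)$ into the $a=0$ term and the shifted maximum over $A(x_0)$. Your packaging via an order-preserving reindexing is a cleaner way to handle the ``largest maximiser'' bookkeeping, but the substance is identical.
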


\begin{proof}
By definition of $f^*$ and (\ref{optp}) we have that
\begin{eqnarray*}
J(x_0,y_0) &=& \beta^{\gamma} G\left(x_0-a_0,\frac{y_0+a_0}\beta\right) \left\{ \begin{array}{l}
\ge  \beta^{\gamma} G\left(x_0-a,\frac{y_0+a}\beta\right), \;\mbox{for}\; a=0,\ldots, a_0\\
                      >  \beta^{\gamma} G\left(x_0-a,\frac{y_0+a}\beta\right), \;\mbox{for}\;
                      a=a_0+1,\ldots, x_0.
                     \end{array}\right.
\end{eqnarray*}
The above display implies that
\begin{eqnarray}
\label{shin}
  \beta^{\gamma} G\left(x_0-a_0,\frac{y_0+a_0}\beta\right) \left\{ \begin{array}{l}
  \ge  \beta^{\gamma} G\left(x_0-(a-1),\frac{y_0+a-1}\beta\right), \;\mbox{for}\; a=1,\ldots, a_0+1\\
                      > \beta^{\gamma} G\left(x_0-(a-1),\frac{y_0+a-1}\beta\right), \;\mbox{for}\;
                      a=a_0+2,\ldots, x_0+1.
                     \end{array}\right.
\end{eqnarray}
On the other hand, we also obtain that
\begin{eqnarray*}
J(x_0+1,y_0-1) &=& \max_{a\in A(x_0+1)} \beta^{\gamma} G\left(x_0+1-a,\frac{y_0-1+a}\beta\right) \\
  &= & \max\left\{  \beta^{\gamma} G\left(x_0+1,\frac{y_0-1}\beta\right),
  \max_{a\in A(x_0)} \beta^{\gamma} G\left(x_0-a,\frac{y_0+a}\beta\right) \right\}.
\end{eqnarray*}
Since $f^*(x_0+1,y_0-1)>0,$ we infer that
$$  \beta^{\gamma} G\left(x_0+1,\frac{y_0-1}\beta\right) \le
\max_{a\in A(x_0)} \beta^{\gamma}G\left(x_0-a,\frac{y_0+a}\beta\right).$$
This fact and (\ref{shin}) yield that
$$   \beta^{\gamma} G\left(x_0+1-(a_0+1),\frac{y_0-1+(a_0+1)}\beta\right)
  \ge  \beta^{\gamma} G\left(x_0+1-a,\frac{y_0-1+a}\beta\right),
  $$ for $a=0,\ldots, a_0+1$ and
   $$\beta^{\gamma} G\left(x_0+1-(a_0+1),\frac{y_0-1+(a_0+1)}\beta\right)  >
    \beta^{\gamma} G\left(x_0+1-a,\frac{y_0-1+a}\beta\right),$$
    for $a=a_0+2,\ldots, x_0+1.$
Thus, it follows that $f^*(x_0+1,y_0-1)=a_0+1$.
\end{proof}

\subsection{Optimal policy}
Let $y_0\ge 0$ be fixed and let $x_0\in X$ be the initial state.  Consider the following
 policy $\pi^* := (\pi^*_k)_{k\in\N_0}$ generated by $f^*$ in the following way
 \begin{equation}
\label{s_pol}
 \pi^*_k(\omega_k):=f^*(x_k,y_k),\quad \omega_k\in\Omega_k\quad
k\in\N_0,
\end{equation}
where
 \begin{equation}
\label{s_pol_y}
y_k:=\frac{y_{k-1}+f^*(x_{k-1},y_{k-1})}\beta=\frac{y_0+\sum_{m=0}^{k-1}\beta^m f^*(x_m,y_m)}{\beta^k},
\quad k\in\N.
\end{equation}
Obviously, $\pi\in\widehat{\Pi}.$

\begin{corollary}\label{ruin_p}
Under policy $\pi^*$ ruin occurs with probability 1.
\end{corollary}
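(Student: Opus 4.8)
**Corollary (ruin under $\pi^*$)**: Under policy $\pi^*$ ruin occurs with probability 1.

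My plan is to mirror the argument of Corollary~\ref{ruin_0} from the exponential case, which hinges on two facts: that the post-dividend surplus is uniformly bounded, and that the second Borel--Cantelli lemma forces a long run of negative increments to appear eventually. The key structural input here is Lemma~\ref{lem:pzero}, which asserts $\xi^* := \sup_{y\ge 0}\xi(y) < \infty$, where $\xi(y) = \sup\{x\in\N_0 : f^*(x,y)=0\}$.

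\begin{proof}
Fix the initial state $x_0\in X$ and $y_0\ge 0$. If $x_0<0$ there is nothing to prove, so assume $x_0\ge 0$. At each stage $k$ (before ruin) the insurer pays $f^*(x_k,y_k)$, so the post-dividend surplus is $x_k - f^*(x_k,y_k)$. I claim this quantity is always at most $\xi^*$. Indeed, by Lemma~\ref{lem:paydown} we have $f^*\big(x_k-f^*(x_k,y_k),\,y_k+f^*(x_k,y_k)\big)=0$, and the second argument here is some nonnegative number; hence by the definition of $\xi(\cdot)$ the state $x_k-f^*(x_k,y_k)$ does not exceed $\xi(y_k+f^*(x_k,y_k))\le\xi^*$. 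Therefore the surplus just after any dividend payment never exceeds $\xi^*$, uniformly in $k$, $x_k$ and $y_k$.

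Set $l:=\xi^*+1$. Because the post-dividend surplus is at most $\xi^*$, a run of $l$ consecutive negative income realizations starting from any such post-dividend state drives the surplus below $0$ and triggers ruin. The probability of such a block is
$$\Pop\big(Z_1<0,\ldots,Z_l<0\big)=\Big(\sum_{m=-\infty}^{-1}q_m\Big)^{l},$$
which is strictly positive by assumption (A2). Considering the disjoint blocks of increments $Z_{kl+1},\ldots,Z_{(k+1)l}$ for $k\in\N_0$, and observing that in state $x_{kl}$ (if ruin has not yet occurred) the post-dividend surplus is again at most $\xi^*$, the events
$$A_k=\{Z_{kl+1}<0,\ldots,Z_{(k+1)l}<0\},\quad k\in\N_0,$$
are independent with $\Pop(A_k)=\big(\sum_{m=-\infty}^{-1}q_m\big)^{l}>0$. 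By the second Borel--Cantelli lemma $\Pop(A_k\ \text{i.o.})=1$, and on each realized $A_k$ that occurs after the surplus has been reduced below the barrier, ruin is forced. Hence $\Pop_{x_0}^{\pi^*}(\tau<\infty)=1$.
\end{proof}

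The main obstacle, and the only real difference from the exponential case, is that under $\pi^*$ the dividend function depends on the extended second coordinate $y_k$, which evolves stochastically through \eqref{s_pol_y} and is \emph{unbounded}. One must therefore be careful that the bound on the post-dividend surplus holds uniformly over all attainable values of $y_k$; this is exactly what Lemma~\ref{lem:pzero} guarantees, since $\xi^*$ is a supremum over \emph{all} $y\ge 0$. Once the uniform barrier $\xi^*$ is established, the Borel--Cantelli argument is insensitive to the history-dependence of the policy, because the income increments $Z_n$ are i.i.d.\ and independent of the past, so the blocks $A_k$ remain independent regardless of how the actions are chosen.
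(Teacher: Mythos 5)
Your proof is correct and follows essentially the same route as the paper: both arguments combine Lemma \ref{lem:paydown} with Lemma \ref{lem:pzero} to get the uniform bound $\xi^*$ on the post-dividend surplus, and then run the block Borel--Cantelli argument from Corollary \ref{ruin_0}. Your handling of the uniformity in the (unbounded, history-dependent) second coordinate $y_k$ is exactly the point the paper's proof relies on as well, so there is nothing to add.
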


\begin{proof}
We proceed along similar lines as in the proof of Corollary \ref{ruin_0}. Let $(x_0,y_0)\in\widehat{X}$ with $x_0\ge 0.$
Then, either $x_0\le\xi^*$ or $x_0>\xi^*.$ Observe that in both cases the risk reserve just after the dividend payment
is less or equal to $\xi^*.$ The first case is obvious. In the second case, we deduce from Lemma \ref{lem:paydown} that
$f^*(x_0-f^*(x_0,y_0),y_0+f^*(x_0,y_0))=0,$ which means by Lemma \ref{lem:pzero}
that $x_0-f^*(x_0,y_0)\le\xi(y_0+f^*(x_0,y_0))\le\xi^*.$
Hence, the ruin occurs, if there appears a sequence of length $\xi^*+1$ of negative incomes. The probability
of such event equals $\left(\sum_{m=-\infty}^{-1}q_m\right)^{\xi^*+1},$ see also (\ref{compr}). If the ruin has not
occurred up to the $l$th day with $l=\xi^*+1,$ then either  $x_l\le\xi^*$ or $x_l>\xi^*.$ Now the remaining part follows
from the proof of Corollary  \ref{ruin_0}.
\end{proof}

\begin{theorem}\label{optimalp}
 For every $(x,y)\in\widehat{ X}$ it holds that $J(x,y) = J_{\pi^*}(x,y).$\end{theorem}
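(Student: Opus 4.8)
The plan is to prove that the policy $\pi^*$ generated by the largest maximiser $f^*$ attains the supremum $J(x,y)$, by iterating the optimality equation \eqref{optp} along $\pi^*$ and then passing to the limit, exactly as was done for the exponential case in Theorem \ref{theo:optimal}. The extra ingredient compared to the exponential setting is that the reward is now genuinely accumulating in the second coordinate $y_k$, so the ``tail term'' one must control is not bounded by $1$ but grows, and this is where the work lies.

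First I would start from \eqref{optp}, which by Theorem \ref{theo:Jfixedpoint} holds for $J$, and use that $f^*$ is its largest maximiser together with the definition \eqref{s_pol}--\eqref{s_pol_y} of $\pi^*$ to write, for $x\ge 0$,
\begin{equation*}
J(x,y)=\beta^\gamma\sum_{k=-\infty}^\infty J\!\left(x-f^*(x,y)+k,\tfrac{y+f^*(x,y)}\beta\right)q_k.
\end{equation*}
Iterating this identity $n$ times under $\pi^*$ — splitting at each stage according to whether ruin has occurred — I expect to obtain the recursion
\begin{equation*}
J(x,y)=\Eop_x^{\pi^*}\!\left[\Big(\sum_{k=0}^{(n-1)\wedge(\tau-1)}\beta^k a_k+y\Big)^\gamma\,1[\tau<n]
+\beta^{\gamma n}J\big(x_n,y_n\big)\,1[\tau\ge n]\right],
\end{equation*}
where $y_n$ is given by \eqref{s_pol_y} and the factor $\beta^{\gamma n}$ comes from the $\beta^\gamma$ in front of each application of $T$. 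This is the analogue of \eqref{opt_pol}.

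The main obstacle is showing that the second term vanishes as $n\to\infty$. On the event $\{\tau\ge n\}$ one has $y_n=\big(y+\sum_{m=0}^{n-1}\beta^m a_m\big)/\beta^n$, so $\beta^{\gamma n}J(x_n,y_n)$ involves the large factor $\beta^{-\gamma n}$ multiplying $y_n^\gamma$, and naively the $\beta^{\gamma n}$ and $\beta^{-\gamma n}$ cancel. The fix is to use the upper bound of Lemma \ref{lem:pbounds}: $J(x_n,y_n)\le\big(x_n+\tfrac{\beta}{1-\beta}\Eop Z^+ + y_n\big)^\gamma$, so that
\begin{equation*}
\beta^{\gamma n}J(x_n,y_n)\le\Big(\beta^n x_n+\beta^n\tfrac{\beta}{1-\beta}\Eop Z^+ + y+\sum_{m=0}^{n-1}\beta^m a_m\Big)^\gamma.
\end{equation*}
Here $\beta^n x_n\to 0$ since under $\pi^*$ the post-dividend reserve stays bounded by $\xi^*$ (Lemmas \ref{lem:paydown}, \ref{lem:pzero}), and the accumulated-dividend sum converges to the finite random total discounted payoff. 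Thus on $\{\tau\ge n\}$ the integrand is dominated by an integrable random variable, and since Corollary \ref{ruin_p} gives $\Pop_x^{\pi^*}(\tau\ge n)\to 0$, the dominated convergence theorem forces $\Eop_x^{\pi^*}[\beta^{\gamma n}J(x_n,y_n)\,1[\tau\ge n]]\to 0$.

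Finally, letting $n\to\infty$ in the iterated identity and applying monotone (or dominated) convergence to the first term, I would conclude
\begin{equation*}
J(x,y)=\Eop_x^{\pi^*}\Big(\sum_{k=0}^{\tau-1}\beta^k a_k+y\Big)^\gamma=J_{\pi^*}(x,y),
\end{equation*}
for $x\ge 0$; the case $x<0$ is immediate since both sides equal $y^\gamma$. This establishes optimality of $\pi^*$.
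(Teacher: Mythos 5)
Your proposal is correct, and it follows the same skeleton as the paper's proof: both iterate the optimality equation of Theorem \ref{theo:Jfixedpoint} along $\pi^*$ to reach exactly the identity \eqref{eq1} (your indicator convention $1[\tau<n]$ versus the paper's $1[\tau\le n]$ is immaterial, since on $\{\tau=n\}$ one has $\beta^{\gamma n}J(x_n,y_n)=\beta^{\gamma n}y_n^\gamma=\bigl(y+\sum_{k=0}^{\tau-1}\beta^k a_k\bigr)^\gamma$, which coincides with the integrand of the other term), and both finish via a.s.\ ruin (Corollary \ref{ruin_p}) and a limit passage. Where you genuinely differ is the treatment of the tail term. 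You kill it wholesale: dominate $\beta^{\gamma n}J(x_n,y_n)1[\tau\ge n]$ by a single integrable random variable and apply dominated convergence, using $1[\tau\ge n]\to 0$ a.s. For this to be airtight you should make the integrability explicit, since a.s.\ finiteness of the total discounted dividend is not enough for domination: under $\pi^*$ one has $a_m\le x_m\le \xi^*+Z_m^+$ for $m\ge 1$ on $\{\tau>m\}$ (Lemmas \ref{lem:paydown} and \ref{lem:pzero}), hence $\sum_{m}\beta^m a_m\le x+\sum_{m\ge1}\beta^m(\xi^*+Z_m^+)$ has finite expectation by (A1), and $t^\gamma\le 1+t$ then yields an integrable dominating variable. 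The paper never shows that the whole tail vanishes. Instead it splits $\bigl(x_n+\tfrac{\beta}{1-\beta}\Eop Z^++y_n\bigr)^\gamma\le x_n^\gamma+\bigl(\tfrac{\beta}{1-\beta}\Eop Z^+\bigr)^\gamma+y_n^\gamma$ by subadditivity, kills the $x_n^\gamma$ piece with Jensen's inequality and the crude bound $\Eop_x^{\pi^*}\bigl[x_n1[\tau>n]\bigr]\le x+n\Eop Z^+$ (no $\xi^*$ bound needed there), and then recombines the surviving piece $(\beta^\gamma)^n y_n^\gamma 1[\tau>n]$, which via \eqref{s_pol_y} equals $\bigl(y+\sum_{k=0}^{n-1}\beta^k a_k\bigr)^\gamma 1[\tau>n]$, with the ruin term of \eqref{eq1}, so that only monotone convergence is required. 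What each route buys: yours is more direct and delivers the equality $J=J_{\pi^*}$ in one stroke, at the price of the domination argument; the paper's recombination trick avoids all integrability and domination considerations, at the price of concluding only $J\le J_{\pi^*}$ and then invoking $J_{\pi^*}\le J$ from the definition of $J$ as a supremum.
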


\begin{proof}
From Theorem \ref{theo:Jfixedpoint}  and the definition of $\pi^*$ given in (\ref{s_pol}) we have that
$$ J(x,y)=\beta^\gamma\sum_{x_1\in X} J(x_1,y_1)q(x_1|x,f^*(x,y))$$
for $x\in\N_0$ and $y\ge 0$ with $y_1$ defined in (\ref{s_pol_y}) and $y_0:=y.$
Hence,
 \begin{eqnarray}
 \label{pomoc}
 J(x,y)&=&\beta^\gamma\sum_{x_1\ge 0} J(x_1,y_1)q(x_1|x,f^*(x,y))
 +\beta^\gamma\sum_{x_1<0} (y_1)^\gamma q(x_1|x,f^*(x,y))\\\nonumber
 &=&\beta^\gamma\sum_{x_1\ge 0} J(x_1,y_1)q(x_1|x,f^*(x,y))+\sum_{x_1<0} (y+f^*(x,y))^\gamma
 q(x_1|x,f^*(x,y)).
 \end{eqnarray}
 Iterating (\ref{pomoc}) $(n-1)$ times and making use of the policy $\pi^*$ we arrive at the following
 equation
\begin{eqnarray}
\label{eq1}
\lefteqn{ J(x,y) =(\beta^\gamma)^n  \Eop_x^{\pi^*}\left(J(x_n,y_n)1[\tau>n]\right)+}\\ \nonumber
&&
 \Eop_x^{\pi^*}\left[ \left(y+\sum_{k=0}^{(n-1) \wedge (\tau-1)}\beta^k f^*(x_k,y_k)\right)^\gamma
1[\tau \le n]\right].
\end{eqnarray}
From Lemma \ref{lem:pbounds} and the concavity of $x\mapsto x^\gamma,$
we obtain the following bound for the first term in (\ref{eq1})
\begin{eqnarray}
\label{eq2} \nonumber
\lefteqn{
(\beta^\gamma)^n\Eop_x^{\pi^*}\left(J(x_n,y_n)1[\tau>n]\right)\le(\beta^\gamma)^n
\Eop_x^{\pi^*}\left(\left(x_n+y_n+\frac\beta{1-\beta}\Eop Z^+\right)^\gamma1[\tau>n]\right)}\\
&\le& (\beta^\gamma)^n\Eop_x^{\pi^*}\left(x_n^\gamma1[\tau>n]\right) +(\beta^\gamma)^n
\left(\frac\beta{1-\beta}\Eop Z^+\right)^\gamma
+(\beta^\gamma)^n\Eop_x^{\pi^*}\left((y_n)^\gamma 1[\tau>n]\right)
\end{eqnarray}
By  (\ref{s_pol_y}) the third term in (\ref{eq2}) can be written as follows
 \begin{equation}
\label{eq3}
(\beta^\gamma)^n\Eop_x^{\pi^*}\left((y_n)^\gamma 1[\tau>n]\right)=\Eop_x^{\pi^*}
\left[ \left(y+\sum_{k=0}^{n-1}\beta^k f^*(x_k,y_k)\right)^\gamma
1[\tau> n]\right].
\end{equation}
Next by (\ref{maineq}) we get for  the first term in (\ref{eq2})  the following
\begin{eqnarray}
\label{eq4}
0&\le& (\beta^\gamma)^n\Eop_x^{\pi^*}\left(x_n^\gamma1[\tau>n]\right)
\le\left(\beta^n\Eop_x^{\pi^*}(x_n1[\tau>n])\right)^\gamma \\ \nonumber
&\le&  \left(\beta^n\Eop_x^{\pi^*}\left[\left(x-\sum_{k=0}^{n-1}f^*(x_k,y_k)
+\sum_{k=1}^{n}Z_k^+\right)1[\tau>n]\right]\right)^\gamma \\ \nonumber
&\le&  \left(\beta^n x +\beta^n n\Eop Z^+\right)^\gamma.
\end{eqnarray}
 By our assumption (A1) and (\ref{eq4}) the first term in (\ref{eq2}) converges to $0$ as $n\to\infty.$
 Observe that the same
 remark also applies for the second term in (\ref{eq2}). Summing up, from (\ref{eq1}), (\ref{eq2}) and
 (\ref{eq3}) we obtain that
 $$J(x,y) \le \lim_{n\to\infty}  \Eop_x^{\pi^*}\left(y+\sum_{k=0}^{(n-1) \wedge (\tau-1)}\beta^k f^*(x_k,y_k)\right)^\gamma.$$
 Now the monotone convergence theorem yields that
$J(x,y)\le J_{\pi^*}(x,y).$
\end{proof}

We close this section with a conclusion for our original model.

\begin{corollary}\label{optimal_policy_1}
Let $y_0:=0.$ Then, $\pi^*$ is optimal for the original
  optimisation problem, i.e.,
  $\widehat{J}(x) = \widehat{J}_{\pi^*}(x).$
\end{corollary}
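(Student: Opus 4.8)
The plan is to obtain this corollary as an immediate consequence of Theorem~\ref{optimalp} by specialising the auxiliary accumulation variable to $y=0$. The starting point is the two elementary identifications already implicit in the text: at the level of value functions one has $\widehat{J}(x)=J(x,0)$, and at the level of a single fixed policy one has the analogous identity $\widehat{J}_{\pi^*}(x)=J_{\pi^*}(x,0)$. First I would verify the latter. When the accumulation variable is initialised at $y_0=0$, the defining functional $J_{\pi^*}(x,0)=\Eop_x^{\pi^*}\big(\sum_{k=0}^{\tau-1}\beta^k a_k+0\big)^\gamma$ collapses to $\Eop_x^{\pi^*}\big(\sum_{k=0}^{\tau-1}\beta^k a_k\big)^\gamma$, which is exactly $\widehat{J}_{\pi^*}(x)$, since $r(x_k,a_k)=a_k$, since $a_k=0$ once ruin has occurred, and since $U_\gamma(t)=t^\gamma$. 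The identity $\widehat{J}(x)=J(x,0)$ is obtained in the same way after taking the supremum over $\pi\in\Pi$.

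With these two identities in hand, the proof reduces to a short chain of equalities. I would write
$$\widehat{J}(x)=J(x,0)=J_{\pi^*}(x,0)=\widehat{J}_{\pi^*}(x),$$
where the middle equality is precisely Theorem~\ref{optimalp} evaluated at the point $(x,0)\in\widehat{X}$. Since the left-hand side is the supremum over all policies while the right-hand side is the value attained by the single policy $\pi^*$, this chain establishes that $\pi^*$ is optimal for the original problem, which is the assertion.

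The only point requiring care is not a genuine obstacle but a bookkeeping check: one must make sure that the $\pi^*$ appearing in the corollary is the policy generated by $f^*$ through (\ref{s_pol})--(\ref{s_pol_y}) with the initial value $y_0=0$, so that the accumulation variable genuinely records $\beta^{-k}\sum_{m=0}^{k-1}\beta^m f^*(x_m,y_m)$ started from zero, and that this is the \emph{same} $\pi^*$ for which Theorem~\ref{optimalp} was proved. Because $\pi^*\in\widehat{\Pi}\subset\Pi$, it is a bona fide (history-dependent) policy for the original model, so no additional measurability or admissibility argument is needed. In short, once $y_0=0$ is fixed, the corollary is a direct reading of Theorem~\ref{optimalp}, and its entire content lies in the two definitional identifications noted above.
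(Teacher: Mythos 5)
Your proposal is correct and takes essentially the same route as the paper: the corollary appears there without a separate proof precisely because it follows from the chain $\widehat{J}(x)=J(x,0)=J_{\pi^*}(x,0)=\widehat{J}_{\pi^*}(x)$, where the middle equality is Theorem \ref{optimalp} evaluated at $(x,0)$ and the outer identities are the definitional identifications you verify (the first of which the paper records as ``Obviously, $J(x,0)=\widehat{J}(x)$''). Your bookkeeping check that the policy $\pi^*$ generated by $f^*$ with $y_0=0$ lies in $\widehat{\Pi}\subset\Pi$ is exactly the point the paper leaves implicit, so nothing is missing.
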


\begin{remark}
Note that in the case of a power utility, the optimal policy is history-dependent,
 but depends on the history only through the accumulated discounted dividends given by $(y_k)$ in \eqref{s_pol_y}.
\end{remark}

\begin{remark}
It is well-known that the logarithmic utility function $U(x)=\log(x)$ can be obtained as a limit from the power utility since
$$ \lim_{\gamma\to 0} \frac1\gamma \big(x^\gamma-1\big)=\log(x).$$
Indeed, the problem can then be treated for the logarithmic utility in a similar way. The optimality equation is given by
$$ J(x,y) = \log(\beta) + \max_{a\in A(x)}
\left[ \sum_{k=-\infty}^\infty J\left(x-a+k, \frac{a+y}\beta\right) q_k \right]$$
and we can follow the same line of analysis. It is worth mentioning that the power and logarithmic utility functions
are examples of the so-called HARA ({\it hyperbolic absolute risk aversion}) utilities, whereas the exponential utility function belongs to
the CARA ({\it  constant absolute risk aversion}) class of utilities. The reader is referred, for instance,  to \cite{br,br_mor, turkey,fs}
and references cited therein, for
further properties of the aforementioned functions.
\end{remark}

\section{Howard's Policy Improvement}
In this section we provide one numerical tool to solve these problems which is known under the name Howard policy improvement. We restrict the presentation here to the exponential utility function: Start with an arbitrary policy of the form $\pi = (f(\cdot,\gamma), f(\cdot,\gamma\beta),\ldots)$ induced by a decision rule $f$ where we assume that $f$ is such that $f(x,\theta) \ge x-s^*$ for all $x>s^*$ and all $\theta$. Note that we define $s^* := \sup_{\theta \in [\gamma,0)}s(\theta)$ (see Theorem \ref{theo:xi_finite}). Take e.g. $f(x,\theta)=x^+$. We write $J_f:= J_\pi$. Next we compute the largest minimizer $h(x,\theta)$ of the expression
$$ a \mapsto e^{\theta a} G_f(x-a,\theta \beta),\; a\in\{0,1,\ldots ,x\}$$
where $$G_f(x,\theta) := \sum_{k=-x}^\infty J_f(x+k,\theta) q_k + \sum_{k=-\infty}^{-x-1} q_k.$$
We claim now that
\begin{lemma}
The new decision rule $h$ and the corresponding value function $J_h$ have the following properties:
\begin{itemize}
\item[a)] $h\big(x-h(x,\theta),\theta\big)=0$ for all $x$ and $\theta$.
  \item[b)] $h(x,\theta) \ge x-s^*$ for all $x>s^*$ and all $\theta$.
  \item[c)] $e^{\theta x} \underline{h}(\theta)\le J_h \le J_f\le e^{\theta x}  \overline{h}(\theta).$
\end{itemize}
\end{lemma}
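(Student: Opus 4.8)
The plan is to establish the four inequalities in the order a), then the right-hand bound in c), then b), and finally the improvement inequality in c), the whole argument running parallel to the proofs of Theorem~\ref{theo:xi_finite}, Corollary~\ref{ruin_0} and Theorem~\ref{theo:optimal}. Throughout I would use the policy-evaluation identity
\[
J_f(x,\theta)=e^{\theta f(x,\theta)}\,G_f\big(x-f(x,\theta),\theta\beta\big),
\]
obtained by conditioning on the first transition exactly as in the derivation of \eqref{optg}, together with the facts that $0\le J_f\le 1$ and $J\le J_f$ (because $J$ is the infimum over all policies).

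For a) I would argue directly, with no monotonicity of $J_f$. Set $a^*:=h(x,\theta)$, the largest minimiser of $\psi(a):=e^{\theta a}G_f(x-a,\theta\beta)$ on $\{0,\dots,x\}$, and $v:=x-a^*$. For $b\in\{0,\dots,v\}$ the action $a^*+b$ is feasible and $\psi(a^*+b)=e^{\theta a^*}e^{\theta b}G_f(v-b,\theta\beta)$; since $a^*$ is the largest minimiser, $\psi(a^*)\le\psi(a^*+b)$, strictly for $b\ge1$. Dividing by $e^{\theta a^*}>0$ shows $b=0$ is the unique minimiser of $b\mapsto e^{\theta b}G_f(v-b,\theta\beta)$, i.e. $h(v,\theta)=0$, which is a).

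For b) I would first record the invariant $J_f(x,\theta)\le e^{\theta x}\overline{h}(\theta)$, which is the right-hand inequality of c): it holds with equality for the starting rule $f(x,\theta)=x^+$ since $J_{\pi^+}=e^{\theta x}\overline{h}(\theta)$ by Lemma~\ref{lem:bounds}, and is then propagated along the iteration by the improvement $J_h\le J_f$ proved below. Granting it, suppose $h(x,\theta)=0$; then the greedy value equals $G_f(x,\theta\beta)$. Since $J\le J_f$, Lemma~\ref{lem:bounds} gives $J_f(x+k,\theta\beta)\ge e^{\theta\beta(x+k)}\underline{h}(\theta\beta)$, and running the same chain of estimates as in the proof of Theorem~\ref{theo:xi_finite} (lower-bounding the ruin terms by $e^{\theta\beta x}\underline{h}(\theta\beta)q_k$ and using $\underline{h}(\theta)=\underline{h}(\theta\beta)\,\Eop e^{\theta\beta Z^+}$) yields $G_f(x,\theta\beta)\ge e^{\theta\beta x}\underline{h}(\theta)$. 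Combined with $G_f(x,\theta\beta)\le J_f(x,\theta)\le e^{\theta x}\overline{h}(\theta)$ this forces $x\le s(\theta)\le s^*$. Thus $h(\cdot,\theta)=0$ only below $s^*$; by a) the level $x-h(x,\theta)$ is such a point, so $x-h(x,\theta)\le s^*$ and hence $h(x,\theta)\ge x-s^*$, which is b).

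For the improvement $J_h\le J_f$ I would introduce the one-step evaluation operator
\[
L_h\phi(x,\theta):=e^{\theta h(x,\theta)}\sum_{x'\in X}\phi(x',\theta\beta)\,q\big(x'|x,h(x,\theta)\big),
\]
with the convention $\phi(x',\cdot)=1$ for $x'<0$, which is monotone because $e^{\theta h}>0$. Since $h$ minimises $a\mapsto e^{\theta a}G_f(x-a,\theta\beta)$, we get $L_hJ_f\le L_fJ_f=J_f$, whence $L_h^nJ_f\le J_f$ for every $n$ by monotonicity. By b) the post-dividend surplus under $h$ never exceeds $s^*$, so the Borel--Cantelli argument of Corollary~\ref{ruin_0} gives ruin $\Pop_x^{h}$-almost surely; then, exactly as in the proof of Theorem~\ref{theo:optimal} and using $0\le J_f\le1$, letting $n\to\infty$ yields $L_h^nJ_f\to J_h$, so $J_h\le J_f$. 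The lower bound $e^{\theta x}\underline{h}(\theta)\le J_h$ is immediate from $J\le J_h$ and Lemma~\ref{lem:bounds}, and $J_h\le e^{\theta x}\overline{h}(\theta)$ follows by chaining, completing c). The main obstacle is precisely this interdependence: b) needs the upper bound $J_f\le e^{\theta x}\overline{h}(\theta)$, yet that bound is available only as an invariant that the improvement inequality maintains, and the proof of almost-sure ruin feeding that inequality itself relies on b). The clean resolution is to treat $J_f\le e^{\theta x}\overline{h}(\theta)$ as a hypothesis valid for the initial rule $\pi^+$ and propagated by the iteration, so that b) and c) are proved jointly rather than in isolation.
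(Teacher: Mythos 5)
Your proof is correct and follows the paper's own argument essentially step by step: part a) via the largest-minimiser property and the multiplicative structure of $e^{\theta a}$ (the paper phrases the same argument as a contradiction), part b) via the two-sided bounds of Lemma \ref{lem:bounds} and the threshold $s(\theta)$ from Theorem \ref{theo:xi_finite} combined with a), and part c) by iterating the one-step improvement inequality and invoking b) to get almost-sure ruin as in Corollary \ref{ruin_0}, then passing to the limit as in Theorem \ref{theo:optimal}.

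The one place where you go beyond the paper is, in fact, a point where the paper is loose: in its proof of b) it asserts $J_f(x,\theta)\le e^{\theta x}\overline{h}(\theta)$ as if it were automatic, although its standing hypothesis on $f$ is only the barrier property $f(x,\theta)\ge x-s^*$ for $x>s^*$, which does not imply that bound (e.g.\ a rule that pays nothing below the barrier can be strictly worse than $\pi^+$ when ruin is likely, so its value can exceed $e^{\theta x}\overline{h}(\theta)=J_{\pi^+}(x,\theta)$). Your resolution --- treating $J_f\le e^{\theta x}\overline{h}(\theta)$ as an invariant that holds with equality for the initial rule $\pi^+$ by Lemma \ref{lem:bounds} and is propagated along the iteration by the improvement inequality $J_h\le J_f$ of c), so that b) and c) are established jointly by induction on the iteration --- is exactly the right way to close this gap, and it reflects how the lemma is actually used in the policy-improvement scheme.
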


\begin{proof}
\begin{itemize}
\item[a)] If $h(x,\theta)=0$ or $h(x,\theta)=x$ the statement is true. Now let $0<h(x,\theta)<x$ and suppose that $h\big(x-h(x,\theta),\theta\big)>0$, i.e. there is an $a^*>0$ s.t.
    $$ e^{\theta a^*} G_f\big(x-h(x,\theta)-a^*,\theta \beta\big) \le G_f\big(x-h(x,\theta),\theta \beta\big).$$
    On the other hand by the definition of $h$ we have for all $a>h(x,\theta)$:
   $$ e^{\theta h(x,\theta)} G_f\big(x-h(x,\theta),\theta \beta\big) < e^{\theta a} G_f\big(x-a,\theta \beta\big).$$
  Combining these inequalities leads to (note that $x-h(x,\theta)-a^*\ge 0$)
  \begin{eqnarray*}
     e^{\theta h(x,\theta)} G_f\big(x-h(x,\theta),\theta \beta\big) & <& e^{\theta (h(x,\theta)+a^*)} G_f\big(x-h(x,\theta)-a^*,\theta \beta\big) \\
     &\le &  e^{\theta h(x,\theta)} G_f\big(x-h(x,\theta),\theta \beta\big)
  \end{eqnarray*}
  which is a contradiction. Thus, the statement is shown.
  \item[b)] We show first for $x>s^*$ and arbitrary $\theta$ that $h(x,\theta)>0$. In order to do this, consider the expression
  $ e^{\theta a} G_f(x-a,\theta \beta)$ for $a=0$ and $a=f(x,\theta)$. By definition we obtain for $a=f(x,\theta)$ that
      $$  e^{\theta f(x,\theta)} G_f\big(x- f(x,\theta),\theta \beta\big) =J_f(x,\theta) \le e^{\theta x} \bar{h}(\theta).$$
      For $a=0$ we obtain that
      \begin{eqnarray*}
              G_f(x,\theta \beta) &=& \sum_{k=-x}^\infty J_f(x+k,\theta\beta) q_k + \sum_{k=-\infty}^{-x-1} q_k \\
         &\ge & e^{\theta \beta x} \underline{h}(\theta\beta)  \Big( \sum_{k=-x}^\infty e^{\theta\beta k} q_k + \sum_{k=-\infty}^{-x-1} q_k\Big)\\
         &=& e^{\theta \beta x} \underline{h}(\theta\beta) \Eop e^{\theta \beta Z^+} = e^{\theta \beta x} \underline{h}(\theta).
      \end{eqnarray*}
  Furthermore, observe that
  $$ e^{\theta \beta x} \underline{h}(\theta) \ge e^{\theta x} \bar{h}(\theta) \quad \Leftrightarrow \quad x>s(\theta).$$
  Thus, the inequality holds, in particular, if $x \ge s^* = \sup_{\theta \in [\gamma,0)}s(\theta)$. This implies that $0$ cannot be a minimiser,
  so $h(x,\theta)>0$ for all  $x>s^*$ and all $\theta$.
   This fact and point (a) imply the conclusion.
  \item[c)] From the definition of $h$ we obtain:
    $$ J_f(x,\theta) =  e^{\theta f(x,\theta)} G_f\big(x- f(x,\theta),\theta \beta\big) \ge  e^{\theta h(x,\theta)} G_f\big(x- h(x,\theta),\theta \beta\big).$$ Iterating this inequality yields
    $$ J_f(x,\theta) \ge \Eop_x^h \Big[ \exp\Big(\gamma \sum_{k=0}^{(\tau-1)\wedge (n-1)} \beta^k a_k\Big) (J_f(x_n,\gamma \beta^n)
1[\tau \ge n] +1[\tau < n])\Big].$$ The property of $h$ shown in b) now implies that ruin occurs with probability 1 under $h$ and thus as in the proof of Theorem \ref{theo:optimal} we obtain with $n\to\infty$ that $J_f \ge J_h$.

\end{itemize}
\end{proof}

From the proof it follows that in case $f\neq h$, the inequality $J_h(x,\theta) \le J_f(x,\theta)$ is strict for at least one $(x,\theta)$.
Now suppose no improvement is possible, i.e. $h=f$. Hence $J_f$ is another solution of \eqref{eq:fixedpoint}. By
Remark 3.2 $J_f \le J$. On the other hand by the definition of $J$ we have $J_f \ge J$ which implies $J=J_f$.

Finally if the iteration does not stop we obtain a non-increasing sequence $J_{f_0} \ge J_{f_1}\ge \ldots \ge J$.
Denote $\underline{J} := \lim_{n\to\infty} J_{f_n}$. Obviously $\underline{J} \ge J$.
Next  from the definition of an improvement:
$$ J_{f_{k+1}}(x,\theta) \le \min_{a \in A(x)}\Big[ e^{\theta a} G_{f_k}(x-a,\theta \beta)\Big]   \le J_{f_k}(x,\theta).$$
Letting $k\to\infty$ we obtain (note that $\lim$ and $\min$ can be interchanged since $A(x)$ is always finite):
$$\underline{J}(x,\theta) \le  \min_{a\in A(x)}\left[e^{\theta a}
\left(\sum_{k=a-x}^\infty \underline{J}(x-a+k,\theta\beta)q_k+ \sum_{k=-\infty}^{a-x-1} q_k\right)\right]
  \le \underline{J}(x,\theta)$$ hence $\underline{J}$ is another solution of \eqref{eq:fixedpoint} which implies that $\underline{J}=J$.

\section {Concluding remarks}

In this paper, we study the discrete time problem, suggested by Gerber and Shiu \cite{gershiu}, of maximising
the {\it expected utility} of discounted dividends until ruin.  We restrict our attention to the integer-valued surplus process
and to integer payments.  To the best of our knowledge, the only paper that examines a similar issue (with the exponential utility)
is \cite{schach}, where the wealth of insurance company is driven by a Brownian motion with drift. However, the authors
have not been able to solve the problem rigorously. Namely, assuming that a certain integral equation for the barrier function $b(t)$
has a desirable solution
(see Standing Assumption in \cite{schach}), they prove that $b(t)$ is indeed the barrier they search for
(a barrier function is a band function with $n=0$
in Definition 3.1).
Moreover, the numerical experiments provided in Section 1.4 in \cite{schach} are given without their convergence proofs.
This fact and the lack of a complete solution in continuous time and any solution in discrete time to Gerber and Shiu's suggestion
since 2004 indicate that  the problem is not straightforward from the mathematical point of view.
Firstly, similar as in \cite{br_mor,schach} we note that the optimal strategy is time dependent in a certain way. In order to get rid of
non-stationarity we extend the state space to the two-dimensional space.  Within such a new framework our problem becomes stationary.
Secondly, since our dividend payments can be unbounded we cannot directly apply the results from \cite{br_mor, ds1} to deduce that the
value function satisfies the corresponding Bellman equation for the exponential and power utility functions. Nonetheless,
we are able to show that in both cases the value iteration algorithm works (see Theorem 3.1 and Theorem 4.1) and in the exponential
function case the Howard's policy improvement algorithm works (see Section 5). These facts, in turn,
may have a significant meaning, when  one thinks of
numerical examples. Moreover, we are also able to describe the structure of optimal strategies for both utility cases
and to prove for the exponential function case that the optimal policy is a  band-policy.

Numerical experiments are difficult. Let us first recall that
the maximisation of the expectation of  discounted dividends in the
model given by  (\ref{maineq}) with $\Pop(Z_1=1)=p=1-\Pop(Z_1=-N),$ where $p\in (0,1)$ and $N\in \N,$
was a challenging  analytical problem. The reader may find the complete solution, for instance, in \cite{morill},
where it was shown that the optimal
policy is of barrier type.
Our problem, as already mentioned,  is non-stationary and non-separable.
Therefore, the methods that solved analytically  the risk neutral problem are useless here. Moreover, as was noted by
  Gerber and Shiu \cite{gershiu},
in contrast to the risk neutral problem one can expect that in the model with exponential function the optimal policy is not of  barrier type.
This fact does not make easier potential calculations.
Obviously, since obtaining an analytical solution is a  challenge, one can think of numerical methods used in dynamic programming such as
value iteration, policy improvement or others, see \cite{powell}.
However, our surplus process proceeds on the space $X=\Z$ and even the simple aforementioned case  ($\Pop(Z_1=1)=p=1-\Pop(Z_1=-1)$)
requires some truncation of the state space to a finite one in order to obtain numerical results.
In addition, in this model we meet one more obstacle that
have not been treated so far, namely the exponential and power utility functions that imply non-stationarity and non-separability.
Therefore,  the problem of calculating numerically optimal strategies and value functions
 for models with these or other utility functions is left open.

\small
\bibliographystyle{abbrv}

\end{document}